\newcommand{\R}{\mathbb{R}}
\newcommand{\C}{\mathbb{C}}
\newcommand{\N}{\mathbb{N}}
\newcommand{\vspan}{\operatorname{span}}
\DeclareMathOperator{\sign}{sign}
\DeclareMathOperator{\tr}{tr}
\DeclareMathOperator{\Li}{Li}
\renewcommand{\d}[1]{\ensuremath{\operatorname{d}\!{#1}}}
\renewcommand{\Im}{\operatorname{Im}}
\renewcommand{\Re}{\operatorname{Re}}
\renewcommand{\sign}{\operatorname{sgn}}
\newcommand{\biglamb}{\mathrm{big}}
\theoremstyle{plain}
\newtheorem{lause}[subsection]{Theorem}
\newtheorem{lem}[subsection]{Lemma}
\newtheorem{prop}[subsection]{Proposition}
\newtheorem{kor}[subsection]{Corollary}
\newtheorem{huom}[subsection]{Remark}
\newenvironment{varproof}
 {\proof}
 {\endproof}
\theoremstyle{definition}
\title{Tracial Joint Spectral Measures}
\author{Otte Heinävaara}
\address{Department of Mathematics, Princeton University, Princeton, NJ 08544}
\email{oeh@math.princeton.edu}
\date{\today}
\begin{document}

\begin{abstract}
    Given two Hermitian matrices, $A$ and $B$, we introduce a new type of spectral measure, a \textit{tracial joint spectral measure} $\mu_{A, B}$ on the plane. Existence of this measure implies the following two results: 1) any two-dimensional subspace of the Schatten-$p$ class is isometric to a subspace of $L_{p}$, and 2) if $f : \R \to \R$ has non-negative $k$th derivative and $A$ and $B$ are Hermitian matrices with $A$ positive semidefinite, then $t \mapsto \tr f(t A + B)$ has non-negative $k$th derivative. We also give an explicit expression for the measure $\mu_{A, B}$.
\end{abstract}

\maketitle

\section{Introduction}

For $1\le p\le \infty$, let $S_p$ denote the corresponding Schatten von-Neumann trace class, namely the space of compact linear operators $A:\ell_2\to \ell_2$, equipped with the following norm
\begin{align*}
\|A\|_{S_p}=\left(\tr\left[(A^{*} A)^{\frac{p}{2}}\right]\right)^{\frac{1}{p}}.
\end{align*}
Ball, Carlen, and Lieb conjectured~\cite{ball1994sharp} that if $p\ge 2$, then the following inequality holds for any $A,B\in S_p$,
\begin{equation}\label{eq:hanner intro}
\|A + B\|_{S_{p}}^{p} + \|A - B\|_{S_{p}}^{p} \leq \left(\|A\|_{S_{p}}  + \|B\|_{S_{p}}\right)^{p} + \left|\|A\|_{S_{p}}  - \|B\|_{S_{p}}\right|^{p}.
\end{equation}

\sloppy
This inequality is called (the $S_p$ version of) Hanner's inequality, in reference to Hanner's celebrated work~\cite{hanner1956uniform}, where it was established when $S_p$ is replaced by $\ell_p$ (equivalently, when $A,B$ commute), and this was used to compute the moduli of uniform convexity and uniform smoothness of $\ell_p$; see Section~2.5 in the textbook~\cite{MR1817225} of Lieb and Loss for a treatment of this classical material. 

Hanner's inequality for $S_p$ was proved in~\cite{ball1994sharp}  when $p\ge 4$, and it was proved for $p=3$ in our work~\cite{heinavaara2022planes}.  In addition to the trivial case $p=2$, these were the only values of $p$ for which~\eqref{eq:hanner intro}  was previously known (though, the inequality~\eqref{eq:hanner intro} was established under certain further restrictive assumptions on $A, B$ in~\cite{MR0225140, ball1994sharp, MR4256836}). Here, we will prove~\eqref{eq:hanner intro}  in full generality for any $p\ge 2$, thus settling the Ball--Carlen--Lieb (BCL) conjecture. We will achieve this as a quick corollary of a much more general structural result that we obtain herein. 

\begin{kor}\label{hanner}
    Inequality~\eqref{eq:hanner intro} holds for any $p\ge 2$ and any $A,B\in S_p$. 
\end{kor}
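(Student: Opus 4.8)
The plan is to deduce Corollary~\ref{hanner} at once from the structural theorem of this paper --- the assertion that every two-dimensional subspace of $S_p$ is linearly isometric to a subspace of some $L_p(\mu)$ --- combined with the classical Hanner inequality for the spaces $L_p(\mu)$, which was proved by Hanner~\cite{hanner1956uniform} and which, in the direction of~\eqref{eq:hanner intro}, holds for every $p \ge 2$ (see also \cite[Section~2.5]{MR1817225}).

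So fix $p \ge 2$ and $A, B \in S_p$. First I would reduce to the case of finite matrices: truncating $A$ and $B$ to finite rank produces approximants that converge in the $S_p$-norm, and since each of $\|A+B\|_{S_p}$, $\|A-B\|_{S_p}$, $\|A\|_{S_p}$, $\|B\|_{S_p}$ is a continuous function of its argument, it suffices to prove~\eqref{eq:hanner intro} when $A$ and $B$ are matrices --- precisely the regime in which the tracial joint spectral measure $\mu_{A,B}$, and hence the structural theorem, is available. Form the real subspace $V = \vspan_{\R}\{A, B\} \subseteq S_p$, which has dimension at most $2$, and let $T : V \to L_p(\mu)$ be a linear isometric embedding provided by the structural theorem.

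Now put $f = TA$ and $g = TB$. Since $T$ is linear and isometric, $\|A + B\|_{S_p} = \|f + g\|_{L_p}$, $\|A - B\|_{S_p} = \|f - g\|_{L_p}$, $\|A\|_{S_p} = \|f\|_{L_p}$ and $\|B\|_{S_p} = \|g\|_{L_p}$; consequently~\eqref{eq:hanner intro} for $(A, B)$ is nothing other than Hanner's inequality for $(f, g)$ in $L_p(\mu)$, which holds because $p \ge 2$. This settles the corollary modulo the structural theorem. I expect essentially all of the difficulty to sit in that theorem --- building $\mu_{A,B}$ and extracting from it the isometric embedding of two-dimensional subspaces of $S_p$ into $L_p$ --- which is the business of the remainder of the paper; by contrast, the truncation step and the appeal to the scalar Hanner inequality above are routine.
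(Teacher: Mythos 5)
Your argument matches the paper's: Corollary~\ref{schatten_embedding} provides an isometric embedding of $\vspan\{A,B\}\subseteq S_p$ into some $L_p(\mu)$, and Hanner's classical inequality in $L_p$ for $p\ge 2$ then yields~\eqref{eq:hanner intro}. The only cosmetic difference is that you reduce to finite matrices by truncation and continuity before invoking the structural result, whereas Corollary~\ref{schatten_embedding} as stated already handles general $A,B\in S_p$ (via an ultraproduct argument), so your truncation step is harmless but redundant if you invoke that corollary in full.
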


The aforementioned structural result has implications that go far beyond merely proving the BCL conjecture. As another example of a quick corollary of it, we will deduce a conceptually new proof of a conjecture~\cite{MR0416396} of Bessis, Moussa and Villani (BMV), which was proved by Stahl in the celebrated work~\cite{stahl2013proof}; see also~\cite{MR3526447, MR3354963} for expositions and explanations of Stahl's proof, as well as~\cite{MR2981640} for interesting equivalent formulations and implications of Stahl's theorem. Specifically, Stahl's theorem (formerly the BMV conjecture) asserts that the following function is completely monotone for any $n\in \N$ and any two Hermitian $A,B\in M_n(\C)$ such that $A$ is positive semidefinite:
\begin{align}\label{expfun}
    (t\ge 0) \mapsto \tr e^{B - t A}. 
\end{align}
We will soon deduce this as a special case of the  structural result in Theorem~\ref{main3} below, which furthermore implies the following generalization that we do not expect can be deduced using Stahl's approach:   

\begin{kor}\label{traceTone}
    Fix $n\in \N$. Let $A,B\in M_n(\C)$ be Hermitian and let $f:\R\to \R$ be a smooth function with non-negative $k$th derivative. Consider the function $F : \R\to \R$ given by $F(t)= \tr f(t A + B)$. If $k$ is even, then  $F$ is smooth with non-negative $k$th derivative. The same holds for odd $k$ if we additionally assume that $A$ is positive semidefinite.
\end{kor}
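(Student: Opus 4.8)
The plan is to deduce Corollary~\ref{traceTone} from the existence of the tracial joint spectral measure $\mu_{A,B}$, which (anticipating Theorem~\ref{main3}) should satisfy an identity of the form $\tr g(tA+B) = \int_{\R^2} g(ts+u)\,\d{\mu_{A,B}(s,u)}$ for all suitable one-variable functions $g$, where $\mu_{A,B}$ is a compactly supported signed (or positive) measure on the plane whose positivity/signing properties are exactly what is needed. Granting such a representation, the argument is essentially a differentiation under the integral sign: $F(t) = \tr f(tA+B) = \int_{\R^2} f(ts+u)\,\d{\mu_{A,B}(s,u)}$, so, since $f$ is smooth and $\mu_{A,B}$ has compact support, $F$ is smooth and $F^{(k)}(t) = \int_{\R^2} s^k f^{(k)}(ts+u)\,\d{\mu_{A,B}(s,u)}$.

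From here the two cases are immediate modulo the sign properties of $\mu_{A,B}$. If $k$ is even, then $s^k \ge 0$ everywhere, so if $\mu_{A,B}$ is a \emph{positive} measure and $f^{(k)}\ge 0$, the integrand is pointwise non-negative and hence $F^{(k)}(t)\ge 0$. If $k$ is odd, then $s^k$ has the sign of $s$, so I would need $\mu_{A,B}$ to be supported where $s \ge 0$ (equivalently, on the closed right half-plane) — and this is precisely where the hypothesis that $A$ is positive semidefinite enters, since $s$ plays the role of an eigenvalue-type parameter attached to $A$. The BMV statement~\eqref{expfun} is then the special case $f = \exp$, $k$ arbitrary, after replacing $(A,B)$ by $(A,-B)$: all derivatives of $\exp$ are non-negative, $A \succeq 0$, so $t \mapsto \tr e^{B-tA}$ has derivatives alternating in sign in the way required for complete monotonicity (the $k$th derivative of $\tr e^{B - tA}$ in $t$ equals $(-1)^k$ times $\int s^k e^{(B-tA)\text{-stuff}}$, which has sign $(-1)^k$).

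The genuinely hard part is not this corollary but the input it rests on: constructing the tracial joint spectral measure $\mu_{A,B}$ and establishing that it is a positive measure (for the even case) and is supported in $\{s\ge 0\}$ when $A\succeq 0$ (for the odd case). That is the content of the main structural theorem and is where all the work lies; the deduction above is a two-line consequence once the representation and its positivity are in hand. One should also be slightly careful about the class of test functions for which the integral representation is asserted — polynomials suffice by a Weierstrass/Stone density argument together with compact support of $\mu_{A,B}$, and smooth functions follow by approximating $f$ and its first $k$ derivatives uniformly on the (compact) convex hull of $\supp \mu_{A,B}$ projected appropriately, but since we only need the final inequality $F^{(k)}\ge 0$, even a limiting argument through polynomials with non-negative $k$th derivative (e.g. truncated Taylor polynomials of $f$, or Bernstein-type approximants) would close the proof. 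The plan is therefore: (i) invoke the integral representation from Theorem~\ref{main3}; (ii) differentiate $k$ times under the integral; (iii) split into $k$ even (use positivity of $\mu_{A,B}$) and $k$ odd (use positivity of $\mu_{A,B}$ together with $\supp\mu_{A,B}\subseteq\{s\ge 0\}$, which follows from $A\succeq 0$); (iv) note the BMV function~\eqref{expfun} as the special case.
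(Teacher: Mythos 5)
There is a genuine gap: the integral representation you take as your starting point is not what Theorem~\ref{main3} provides. The theorem does \emph{not} say $\tr f(tA+B)=\int_{\R^2} f(ta+b)\,\d{\mu}_{A,B}(a,b)$; it says $\tr H(f)(tA+B)=\int_{\R^2} f(ta+b)\,\d{\mu}_{A,B}(a,b)$, where $H(f)(x)=\int_0^1\frac{1-t}{t}f(xt)\,\d{t}$ is a nontrivial integral transform, and moreover the input $f$ must satisfy $\int_{-M}^{M}|f(t)/t|\,\d{t}<\infty$ (so a generic smooth $f$ with $f(0)\neq 0$ cannot even be inserted on the right-hand side; relatedly, $\mu_{A,B}$ has infinite mass near the origin, so it is not the compactly supported finite measure your differentiation-under-the-integral argument assumes). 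To represent $\tr f(tA+B)$ itself you would have to invert $H$, and positivity of $f^{(k)}$ is not obviously inherited by $H^{-1}(f)^{(k)}$, so your step (ii) does not go through as written.

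The paper's route around this is the piece your proposal is missing: it applies the theorem only to the special functions $f(t)=t_+^{k-1}$ (for $k\ge 2$), which are essentially eigenfunctions of $H$ in the sense that $H(t_+^{k-1})=t_+^{k-1}/(k(k-1))$, so for these the naive identity does hold up to a constant. It then invokes the classical characterization (Bullen) of functions with non-negative $k$th derivative as pointwise limits of $p_{k-1}(t)+\sum_i m_i(t-c_i)_+^{k-1}$ with $m_i\ge 0$, reducing the general smooth $f$ to these building blocks; the parity of $k$ then decides whether $(at+b)_+^{k-1}$ is automatically of the right form in $t$ (even $k$) or whether one additionally needs $\supp\mu_{A,B}\subseteq\{a\ge 0\}$ (odd $k$, from $A\succeq 0$ --- this part of your proposal matches Lemma~\ref{mu_prop}). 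Two further small points: smoothness of $F$ is not free and is quoted from Rellich, and the case $k=1$ must be handled separately since $t_+^{0}/t$ is not integrable at the origin.
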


Corollary~\ref{traceTone} is well-known for $k = 1$ and $k = 2$ (see for instance \cite[Proposition 1]{petz1994survey}) while for $k = 3$ and $k = 4$ it was proven in \cite{heinavaara2022planes} where the full result was also conjectured. Corollary \ref{traceTone} applied to $f(t) = \exp(-t)$ immediately implies Stahl's theorem.

\subsection{Tracial joint spectral measures} We will now describe our main structural result; further applications are deferred to future works (see Section \ref{future_work}).

\begin{lause}\label{main3}
    Let $n$ be a positive integer and $A, B \in M_n(\C)$ be Hermitian. Then, there exists a positive measure $\mu_{A, B}$ on $\R^{2}$, that we call the {\em tracial joint spectral measure} of $A$ and $B$, such that the following is true:

    Fix any measurable function $f$ on $\R$ such that for any $M > 0$,
    \begin{align*}
        \int_{-M}^{M} \left|\frac{f(t)}{t}\right| \d{t} < \infty.
    \end{align*}
    Define a function $H(f) : \R \to \R$ by
    \begin{align*}
        H(f)(x) &= \int_{0}^{1} \frac{1 - t}{t} f(x t) \d{t}.
    \end{align*}
    Then, for any $x, y \in \R$, we have
    \begin{align}\label{mainIdentity}
        \tr H(f)(x A + y B) = \int_{\R^{2}} f(a x + b y) \d{\mu}_{A, B}(a, b).
    \end{align}
\end{lause}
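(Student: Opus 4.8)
The plan is to reduce \eqref{mainIdentity} to a statement about the one-dimensional projections of $\mu$, to construct $\mu$ by an inverse Radon transform, and to isolate positivity as the one genuinely hard point.

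\emph{Reduction.} Write $\lambda_1(x,y)\le\cdots\le\lambda_n(x,y)$ for the eigenvalues of $xA+yB$. By Tonelli (legitimate because of the hypothesis $\int_{-M}^M |f(t)/t|\,\d{t}<\infty$) the left side of \eqref{mainIdentity} equals $\int_\R f\,\d{\tilde\nu_{x,y}}$, where $\tilde\nu_{x,y}:=\int_0^1 \frac{1-t}{t}\,(D_t)_*\bigl(\sum_k \delta_{\lambda_k(x,y)}\bigr)\,\d{t}$ is the dilation-averaged spectral counting measure on $\R$ ($D_t$ being multiplication by $t$), while the right side is $\int_\R f\,\d{\bigl((\pi_{x,y})_*\mu\bigr)}$ with $\pi_{x,y}(a,b)=ax+by$. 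Since the admissible $f$ separate the relevant measures on $\R$, \eqref{mainIdentity} holds for all $f$ if and only if $(\pi_{x,y})_*\mu=\tilde\nu_{x,y}$ for every $(x,y)$, and by $1$-homogeneity of the $\lambda_k$ it suffices to impose this for $(x,y)=\omega$ on the unit circle. Each $\tilde\nu_\omega$ is automatically a positive measure (the weight $\frac{1-t}{t}$ is nonnegative on $[0,1]$), is finite away from $0$, has a $1/|s|$ singularity at $s=0$, and satisfies $\int s^k\,\d{\tilde\nu_\omega}=\frac{1}{k(k+1)}\tr(\omega_1 A+\omega_2 B)^k$, a homogeneous polynomial in $\omega$ of degree $k$; thus the family $(\tilde\nu_\omega)_\omega$ is consistent in the sense of the Radon transform's range conditions (for $k\ge1$; the $k=0$ failure reflects the infinite mass of $\mu$ near the origin), so it is the projection family of at least a signed distribution. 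The content of the theorem is that this distribution is a \emph{positive} measure. By perturbing it is moreover enough to treat \emph{generic} $A,B$, for which $xA+yB$ has simple spectrum for every $(x,y)\neq(0,0)$, so that $\lambda_k$, the unit eigenvectors $v_k$, and everything derived from them are real-analytic; general $A,B$ will follow by weak limits, using the uniform bound $\int(a^2+b^2)\,\d{\mu}\le C(\|A\|,\|B\|)$ that one reads off the moment identity (the case $f(u)=u^2$ of \eqref{mainIdentity}) after integrating $(x,y)$ over the circle.

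\emph{Geometry and construction.} The vector $c_k(\omega):=\bigl(\langle v_k(\omega),Av_k(\omega)\rangle,\langle v_k(\omega),Bv_k(\omega)\rangle\bigr)$ lies in the (convex) joint numerical range of $A$ and $B$, and first-order perturbation theory gives $\nabla\lambda_k(\omega)=c_k(\omega)$; hence $\langle c_k(\omega),\omega\rangle=\lambda_k(\omega)$ (Euler) and $c_k'(\omega)\perp\omega$ (differentiate the $0$-homogeneity of $\nabla\lambda_k$). Consequently the line $\{\,z:\langle z,\omega\rangle=\lambda_k(\omega)\,\}$ is tangent to the ``spectral curve'' $\omega\mapsto c_k(\omega)$; equivalently, the non-smooth locus $\{\,s=\lambda_k(\omega)\,\}$ of the Radon data is dual to $c_k$. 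I would then build $\mu$ by filtered backprojection. A short computation yields the clean filtered data: writing $\varrho_\omega$ for the density of $\tilde\nu_\omega$ on $\R\setminus\{0\}$,
\[
\partial_s \varrho_\omega(s)=-\frac{\sign s}{s^{2}}\,\#\bigl\{\,k:\ \text{$s$ lies strictly between $0$ and $\lambda_k(\omega)$}\,\bigr\}.
\]
Backprojecting this --- with the resulting $1/|z|^{2}$-type singularity at the origin read as a Hadamard finite part, which is forced since $\mu$ is infinite near $0$ and pairs only with functions that vanish there --- produces an explicit density for $\mu$, an integral over $\omega\in S^1$ of elementary functions of $\langle z,\omega\rangle$ and the $\lambda_k(\omega)$; using the tangency relation to resolve the singularities along the curves $c_k$, this can be put in closed form in terms of $A$ and $B$, giving the explicit expression advertised in the abstract. (An equivalent analytic packaging: with $f_z(u)=\frac{u}{z(u-z)}$ one has $\partial_z^2\bigl[\,z\,\tr H(f_z)(xA+yB)\,\bigr]=-\frac{1}{z}\tr(z-xA-yB)^{-1}+\frac{n}{z^{2}}$, so the master function is controlled by the resolvent and $\mu$ is recovered from a discontinuity across the spectral cut.) With $\mu$ so defined, the required identity $(\pi_\omega)_*\mu=\tilde\nu_\omega$ is checked by a coarea change of variables.

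\emph{The obstacle.} The main difficulty is positivity of $\mu$, and it cannot be cheap: the un-averaged analogue --- with $\sum_k\delta_{\lambda_k(\omega)}$ in place of $\tilde\nu_\omega$ --- is \emph{not} a positive measure for non-commuting $A,B$ (already at $n=2$ the term $\tr ABAB$ spoils the fourth moment), so any proof must exploit both the sign of $\frac{1-t}{t}$ on $[0,1]$ and the convex-geometric structure of the spectral curves, in particular that $\lambda_1$ is convex and $\lambda_n$ concave (as a sup, resp.\ inf, of linear functionals), so that $c_1$ and $c_n$ bound the numerical range. I expect the resolution to be either a rewriting of the explicit density that makes nonnegativity manifest --- ideally exhibiting $\mu$ as a genuine nonnegative superposition supported on the cones $\{\,t\,c_k(\omega):0\le t\le 1,\ \omega\in S^1\,\}$ --- or a reduction to a one-variable inequality, or, equivalently, a verification that the relevant function of $(x,y,z)$ built from $\tr(z-xA-yB)^{-1}$ is of Herglotz--Nevanlinna type, whose integral representation then \emph{is} (essentially) $\mu$. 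This positivity step, together with the care needed to make the explicit formula and the coarea verification rigorous near the origin, is where the real work lies; by comparison the reduction, the geometric lemma, and the limiting argument for non-generic $A,B$ are routine.
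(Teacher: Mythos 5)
Your reduction is sound: \eqref{mainIdentity} is indeed equivalent to prescribing the $1$-dimensional marginals $(\pi_\omega)_*\mu=\tilde\nu_\omega$ of $\mu$ for $\omega$ on the unit circle, and constructing a candidate $\mu$ by inverting this Radon transform (equivalently, via Fourier and the projection--slice theorem, which is what the paper actually does: it computes the distributional 2D Fourier transform of $G_{A,B}(x,y)=\tr g(xA+yB)$ with $g=H(e^{i\cdot}-1)$, whose restriction to rays through the origin encodes the filtered one-dimensional data you wrote down). Your geometric observations ($\nabla\lambda_k=c_k$, tangency to the spectral curves, convexity of $\lambda_1$ and concavity of $\lambda_n$) are correct, as is the $L^2$-moment bound used for the weak-compactness passage from generic to general pencils, and your remark that the un-averaged family $\sum_k\delta_{\lambda_k(\omega)}$ does not backproject to a positive measure is a useful sanity check on why the $H$-averaging is essential.

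However, there is a genuine gap, which you yourself flag: you do not prove positivity. You describe it as ``the real work'' and list plausible strategies without carrying any of them out, and the weak-limit step for non-generic $(A,B)$ also presupposes positivity in the generic case, so nothing in the proposal closes the loop. The paper does resolve it, and in fact by precisely the first route you speculate about: the backprojected density turns out to admit the closed form
\[
\frac{\d{\mu}_c}{\d{m}_2}(a,b)=\frac{1}{2\pi}\sum_{i=1}^n\Bigl|\Im\,\lambda_i\bigl(\bigl(I-\tfrac{aA+bB}{a^2+b^2}\bigr)(bA-aB)^{-1}\bigr)\Bigr|,
\]
with the singular part a nonnegative superposition $\sum_{v}\int_0^1\frac{1-t}{t}\,\delta_{(\langle Av,v\rangle t,\langle Bv,v\rangle t)}\,\d{t}$ along the lines $t\,c_k(\omega)$ you identify. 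The absolute value of the imaginary part --- hence positivity --- drops out of the elementary integral $\int_{\R}\log|1-\lambda/t|\,\d{t}=\pi|\Im\lambda|$ (suitably regularized; Lemma~\ref{I1-estimate} in the paper), but establishing this requires the full apparatus you defer: the distributional Fourier transform of $|x|g(x)$ (Lemma~\ref{fourier_lemma}), a careful cutoff-and-change-of-variables to make three divergent pieces individually meaningful, and eigenvalue asymptotics near the singular lines (Lemma~\ref{eigenvalue_dynamics}) to separate the continuous and singular contributions. So the proposal identifies the correct framework and even the correct heuristic for why positivity should hold, but it is not a proof of Theorem~\ref{main3}.
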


If in Theorem~\ref{main3} we set $f(t)=|t|^p$ for $p > 0$, then $H(f)(t)=|t|^p/(p (p + 1))$, and the identity we obtain is
\begin{align*}
    \tr \left|x A + y B\right|^{p} = p (p + 1) \int_{\R^{2}} |a x + b y|^{p} \d{\mu}_{A, B}(a, b).
\end{align*}
Note that this is giving us an embedding of the span of $A$ and $B$ in $S_p$ to $L_{p}(\mu_{A, B})$, mapping $A$ to $(a, b) \to a$ and $B$ to $(a, b) \to b$. This embedding is (proportional to) an isometry for every $p > 0$ simultaneously. This confirms a conjecture we made in~\cite{heinavaara2022planes}, where the special case $p=3$ was proved by an entirely different approach. Formally, the above embedding result is stronger than what was conjectured in~\cite{heinavaara2022planes}, which did not ask for an embedding that works simultaneously for every $p > 0$ (and only conjectured the case $p \geq 1$). The realization that such a stronger statement could hold was an essential conceptual starting point for the present work. 

Passing from the case of Hermitian matrices $A,B$ to general two dimensional subspaces of $S_p$ is standard, namely, we have the following more general statement:

\begin{kor}\label{schatten_embedding}
    Let $p > 0$ and let $A, B \in S_{p}$. Then the span of $A$ and $B$ is isometric to a subspace of $L_{p}(\mu)$ for some positive measure $\mu$.
\end{kor}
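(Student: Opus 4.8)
The plan is to reduce the general statement about two arbitrary operators $A, B \in S_p$ to the finite-dimensional Hermitian case already handled by Theorem~\ref{main3}, via two standard reductions: first replacing general operators by Hermitian ones, then replacing infinite-dimensional operators by matrices, and finally taking a limit of the measures produced by Theorem~\ref{main3}. Throughout, one works with the functional $T \mapsto \|T\|_{S_p}^p = \tr|T|^p$, noting that a linear map between finite-dimensional normed spaces is an isometry onto its image as soon as it preserves the $p$th power of the norm on that image, so it suffices to produce, for each real linear combination $xA + yB$, an identity of the form $\|xA+yB\|_{S_p}^p = c\int_{\R^2}|ax+by|^p\,d\mu(a,b)$ with $c$ a fixed constant (which we may absorb into $\mu$).

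First I would dispose of the passage from general to Hermitian operators. Given $A, B \in S_p$, consider the Hermitian operators $\widetilde A = \begin{pmatrix} 0 & A \\ A^* & 0\end{pmatrix}$ and $\widetilde B = \begin{pmatrix} 0 & B \\ B^* & 0 \end{pmatrix}$ acting on $\ell_2 \oplus \ell_2$. Since $(xA+yB)$ and its adjoint have the same singular values, the operator $x\widetilde A + y\widetilde B = \begin{pmatrix} 0 & xA+yB \\ (xA+yB)^* & 0\end{pmatrix}$ has singular values equal to those of $xA+yB$ each with doubled multiplicity, so $\|x\widetilde A + y\widetilde B\|_{S_p}^p = 2\|xA+yB\|_{S_p}^p$. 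Hence an isometric embedding of $\vspan(\widetilde A, \widetilde B)$ into some $L_p(\mu)$ restricts (after rescaling $\mu$ by $1/2$) to one of $\vspan(A,B)$. This lets me assume $A$ and $B$ are Hermitian.

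Next, the finite-dimensional reduction. The operators $A, B$ are compact and Hermitian; let $P_N$ be the orthogonal projection onto the span of the first $N$ coordinate vectors and set $A_N = P_N A P_N$, $B_N = P_N B P_N$, which are Hermitian $N\times N$ matrices. For fixed $(x,y)$ one has $x A_N + y B_N = P_N(xA+yB)P_N \to xA+yB$ in $S_p$ as $N\to\infty$ (this is the standard fact that finite-rank truncations converge in Schatten norm for compact operators, using $\|P_N T P_N - T\|_{S_p} \le \|P_N T P_N - P_N T\|_{S_p} + \|P_N T - T\|_{S_p}$ and approximating $T$ by finite-rank operators), so $\|xA_N + yB_N\|_{S_p}^p \to \|xA+yB\|_{S_p}^p$ for every $(x,y)\in\R^2$. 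For each $N$, Theorem~\ref{main3} (with $f(t)=|t|^p$, as in the displayed consequence right after the theorem) gives a positive measure $\nu_N := p(p+1)\mu_{A_N,B_N}$ on $\R^2$ with $\|xA_N+yB_N\|_{S_p}^p = \int_{\R^2}|ax+by|^p\,d\nu_N(a,b)$.

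The remaining task is to pass to the limit in $N$, and this is the step I expect to require the most care. Taking $(x,y)$ on the unit circle and integrating, the total "$p$th moment" $\int_{\R^2}(|a|^2+|b|^2)^{p/2}\,d\nu_N$ (up to a dimensional constant) is controlled by $\sup_{\|(x,y)\|=1}\|xA_N+yB_N\|_{S_p}^p$, which is bounded uniformly in $N$ since it converges. Pushing forward $\nu_N$ to the sphere via $(a,b)\mapsto (a,b)/\|(a,b)\|$ and weighting by $\|(a,b)\|^p$ gives a bounded sequence of finite measures on the compact sphere $S^1$; by weak-$*$ compactness (Banach--Alaoglu / Prokhorov on a compact metric space) a subsequence converges to a finite measure $\sigma$ on $S^1$. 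Since $(x,y)\mapsto |ax+by|^p = \|(a,b)\|^p\,|(a/\|(a,b)\|)x + (b/\|(a,b)\|)y|^p$ is, for each fixed $(x,y)$, a bounded continuous function on $S^1$ after this reparametrization, the integrals converge, yielding $\|xA+yB\|_{S_p}^p = \int_{S^1}|ux+vy|^p\,d\sigma(u,v)$. Setting $\mu = \sigma$ (viewed as a measure on $\R^2$ supported on $S^1$), the linear map sending $A \mapsto (u,v)\mapsto u$ and $B\mapsto (u,v)\mapsto v$ extends to the desired isometry of $\vspan(A,B)$ into $L_p(\mu)$; one checks it is well-defined and injective because if $xA+yB=0$ in $S_p$ then $\int|ux+vy|^p\,d\sigma = 0$ forces $ux+vy=0$ $\sigma$-a.e. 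The only subtlety worth spelling out is the uniform bound justifying the weak-$*$ compactness step and the continuity of the test functions after reparametrization; both are routine once the truncation convergence is in place.
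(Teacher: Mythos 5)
Your proof is correct, and the first two reductions (Hermitian dilation via $\begin{pmatrix}0 & T \\ T^* & 0\end{pmatrix}$, then finite-rank truncation) match the paper's. Where you diverge is in passing to the infinite-dimensional limit. The paper proceeds by noting that the truncations give $(1+1/k)$-distortion embeddings of $\vspan(A,B)$ into $L_p(\mu_k)$ and then takes an ultraproduct, invoking the nontrivial structural fact that an ultraproduct of $L_p$ spaces is again (isometrically) an $L_p$ space, with separate references for $p\ge 1$ and $0<p<1$. You instead exploit the fact that the subspace is only two-dimensional: you normalize each $\nu_N$ to a weighted pushforward $\sigma_N$ on the compact circle $S^1$, check that $\int_{S^1} d\sigma_N$ is uniformly bounded (by integrating the defining identity over the unit sphere in the $(x,y)$-variables), and extract a weak-$*$ limit, which makes the identity pass to the limit because $(u,v)\mapsto |ux+vy|^p$ is continuous on $S^1$. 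This is a genuinely more elementary and self-contained limiting argument than the paper's; it buys you independence from the $L_p$-ultraproduct theorem, at the cost of being specific to the two-dimensional setting (whereas the ultraproduct approach would go through verbatim for any finite- or even infinite-dimensional subspace, should the finite-dimensional statement ever be generalized). Both routes are fine here; your compactness step, including the uniform bound on the total mass of the $\sigma_N$, is correctly justified.
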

\begin{proof}
    We have seen that the result holds as a consequence of Theorem~\ref{main3} if $A,B\in M_n(\C)$ are Hermitian. General complex matrices $A,B\in M_n(\C)$ can be reduced to this special case by considering the following Hermitian matrices
    \begin{align*}
        A' = \frac{1}{2^{1/p}}\begin{bmatrix}
            0 & A \\
            A^{*} & 0
        \end{bmatrix},
        \hspace{1cm}
        B' = \frac{1}{2^{1/p}}\begin{bmatrix}
            0 & B \\
            B^{*} & 0
        \end{bmatrix},
    \end{align*}
    and noting that $\|x A' + y B'\|_{S_p} = \|x A + y B\|_{S_p}$ for any $x, y \in \R$.
    
    For general $A,B\in S_p$, by approximating $A$ and $B$ with finite rank operators and applying the finite dimensional result, one sees that for any $k \in \N$ there exists a measure $\mu_{k}$ and a $(1 + 1/k)$-distortion embedding $\vspan(A, B)_{S_{p}} \to L_{p}(\mu_{k})$.
    Take an ultraproduct of these maps with respect to a non-principal ultrafilter $\mathcal{U}$ to get an isometric embedding of $\vspan(A, B)_{S_{p}}$ to the ultraproduct $\left(\prod_{k} L_{p}(\mu_{k})\right)_{\mathcal{U}}$, which is known to be isometric to $L_{p}(\mu)$ for some measure $\mu$; see \cite{MR0305035} for the case $p \geq 1$ and \cite{naor_thesis} for the case $0 < p < 1$.
\end{proof}

This embedding result, combined with Hanner's theorem~\cite{hanner1956uniform}, immediately implies Corollary \ref{hanner}. Much more generally, we have the following ``reduction to commuting'' principle: if an inequality only depends on the $S_{p}$-norms of (real) linear combinations of two complex matrices, then it holds as long as it holds for real diagonal matrices. Thus, any property that only depends on two-dimensional subspaces of $S_{p}$ generalizes directly from $L_{p}$ to $S_{p}$. In particular, $S_{p}$ has the same moduli of uniform convexity and uniform smoothness as $L_{p}$, which is a theorem that was previously established in \cite{ball1994sharp}.

As shown in \cite{heinavaara2022planes}, an analogous embedding is in general impossible for more than two matrices (even for $2\times 2$ Hermitian matrices) whenever $p\in  [1,\infty)\setminus \{2\}$.

If one applies Theorem \ref{main3} to the functions $f(t) = t_+^{k - 1}$ for a positive integer $k$, one quickly arrives at Corollary \ref{traceTone}:

\begin{proof}[Proof of Corollary \ref{traceTone}]
    Smoothness follows from a classical result of Rellich, see~\cite[VII, Theorem 3.9]{MR0203473}. Furthermore, by \cite[Corollary 8]{bullen1971criterion}, it is enough to proof that if $f$ is of the form
    \begin{align}\label{ktone_form}
        t \mapsto p_{k - 1}(t) + \sum_{i = 1}^{M} m_{i} (t - c_{i})_+^{k - 1}
    \end{align}
    where $p_{k - 1}$ is a polynomial of degree at most $k - 1$, $(c_{i})_{i = 1}^{M} \in \R^{M}$, and $(m_{i})_{i = 1}^{M} \in \R_{+}^{M}$, then $\tr f(t A + B)$ is a pointwise limit of functions of the same form. The desired conclusion is clear for the polynomial part, and for the remaining terms we can assume that $M = 1$, $c_{1} = 0$, and $m_{1} = 1$. Also, the case $k = 1$ is classical \cite[Proposition 1]{petz1994survey}, so we may assume that $k \geq 2$.

    Applying Theorem~\ref{main3} for $f(t) = t_+^{k - 1}$ (so that $H(f)(t) = f(t)/(k (k - 1))$) and $(x, y) = (t, 1)$, one gets
    \begin{align*}
        \tr (t A + B)_{+}^{k - 1} = k (k - 1) \int_{\R^{2}} (a t + b)_{+}^{k - 1} \d{\mu}_{A, B}(a, b).
    \end{align*}
    If $k$ is an even integer, the integrand $(a t + b)_{+}^{k - 1}$ is of the form (\ref{ktone_form}) for any $a, b \in \R$, and we are done by the positivity of $\mu_{A, B}$. If $k$ is odd, we further need that the support of $\mu_{A, B}$ is contained in the half plane $\{(a, b) \in \R^{2} \mid a \geq 0\}$, which is the following lemma.
\end{proof}

\begin{lem}\label{mu_prop}
    If $A$ is positive semidefinite, then $\mu_{A, B}$ is supported on the right half-plane $\{(a, b) \in \R^{2} \mid a \geq 0\}$.
\end{lem}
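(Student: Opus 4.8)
The plan is to test identity~\eqref{mainIdentity} against functions $f$ supported on the negative half-line and to use positivity of $A$ to kill the left-hand side. Fix any continuous $f\colon \R\to\R$ with $f\ge 0$ whose support is a compact subset of $(-\infty,0)$; then $t\mapsto f(t)/t$ is continuous with compact support, so $f$ satisfies the integrability hypothesis of Theorem~\ref{main3}. The first observation is that $H$ preserves this support property: if $x\ge 0$ and $t\in(0,1)$, then $xt\ge 0\notin\supp f$, so $f(xt)=0$ and hence $H(f)(x)=\int_0^1 \frac{1-t}{t}f(xt)\,\d{t}=0$. Thus $H(f)$ vanishes identically on $[0,\infty)$.

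Next I would invoke the hypothesis that $A$ is positive semidefinite, so $\spec(A)\subseteq[0,\infty)$. Since $H(f)$ vanishes on $\spec(A)$, the spectral theorem gives $H(f)(A)=0$, and in particular $\tr H(f)(A)=0$. Applying~\eqref{mainIdentity} with $(x,y)=(1,0)$ then yields
\[
0=\tr H(f)(A)=\int_{\R^2} f(a)\,\d{\mu}_{A,B}(a,b),
\]
so, since $f\ge 0$, the integral of $f(a)$ against $\mu_{A,B}$ vanishes for every test function $f$ of the above type.

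Finally I would upgrade this to the desired support statement. The open left half-plane is the increasing union of the compact rectangles $K_N=\{(a,b): -N\le a\le -1/N,\ |b|\le N\}$. For each $N$ choose a continuous $f_N\ge 0$ with compact support in $(-\infty,0)$ and $f_N\ge 1$ on $[-N,-1/N]$ (a trapezoidal bump works). Then $f_N(a)\ge \mathbf{1}_{K_N}(a,b)$ pointwise, so $\mu_{A,B}(K_N)\le \int_{\R^2} f_N(a)\,\d{\mu}_{A,B}(a,b)=0$, and letting $N\to\infty$ gives $\mu_{A,B}(\{a<0\})=0$, i.e.\ $\mu_{A,B}$ is supported on $\{a\ge 0\}$.

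I do not anticipate a genuine obstacle here: the only care needed is in verifying the integrability hypothesis of Theorem~\ref{main3} for the chosen test functions (ensured by taking them compactly supported away from $0$) and in the measure-theoretic exhaustion of $\{a<0\}$ by compact sets. All the substance is contained in the two elementary remarks that $H$ maps functions supported in $(-\infty,0)$ to functions vanishing on $[0,\infty)$, and that functional calculus at a positive semidefinite matrix only sees the values of $H(f)$ on the nonnegative reals — which is exactly where positivity of $A$ enters.
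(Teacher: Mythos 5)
Your proposal is correct and follows the paper's argument exactly: choose $f\ge 0$ supported in $(-\infty,0)$, observe that $H(f)$ also vanishes on $[0,\infty)$, apply~\eqref{mainIdentity} at $(x,y)=(1,0)$, and use $\spec(A)\subseteq[0,\infty)$ to make the left-hand side vanish. Your version is slightly more explicit than the paper's one-line conclusion (you spell out the compact-exhaustion step and verify the integrability hypothesis by taking $f$ compactly supported away from $0$), but the substance and strategy are the same.
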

\begin{proof}
    Take any $f$ which is positive for the negative reals and vanishes for non-negative reals; $H(f)$ then has the same property. Applying (\ref{mainIdentity}) for this $f$ and $(x, y) = (1, 0)$ results in
    \begin{align*}
        \tr H(f)(A) = \int_{\R^{2}} f(a) \d{\mu}_{A, B}(a, b).
    \end{align*}
    As the eigenvalues of $A$ are non-negative, the left-hand side of this equation vanishes. Hence, so does the right-hand side, implying the claim.
\end{proof}

Observe that smoothness was not needed in the proof of Corollary \ref{traceTone}, and one could generalize it to $k$-tone/$k$-convex functions (see for instance~\cite{bullen1971criterion} for the relevant definitions).

One can check that the tracial joint spectral measure $\mu_{A, B}$ in Theorem \ref{main3} is necessarily unique away from $0$, see Proposition \ref{mu_unique}. While the exact form of the measure was not important for the above applications, $\mu_{A, B}$ turns out to have a particularly simple expression:

\begin{lause}\label{main4}
    Let $n$, $A$, $B$, and $\mu_{A, B}$ be as in Theorem \ref{main3}. Denote by $\mu_{c} = \mu_{c, A, B}$ and $\mu_{s} = \mu_{s, A, B}$ the continuous and singular parts of $\mu_{A, B}$ w.r.t. the Lebesgue measure $m_{2}$ on $\R^{2}$. We assume some linear combination of $A$ and $B$ is invertible. Then, the continuous part $\mu_{c}$ is given by
    \begin{align*}
        \frac{\d{\mu}_{c}}{\d{m}_{2}}(a, b) = \frac{1}{2 \pi}\sum_{i = 1}^{n} \left|\Im\left( \lambda_{i}\left(\left(I - \frac{a A + b B}{a^{2} + b^{2}}\right) (b A - a B)^{-1} \right)\right) \right|.
    \end{align*}
    Furthermore, if $A$ is invertible and $A^{-1} B$ has $n$ distinct eigenvalues, the singular part $\mu_{s}$ satisfies
    \begin{align*}
        \mu_{s}(\varphi) = \sum_{v \in E(A^{-1} B)} \int_{0}^{1} \frac{1 - t}{t} \varphi\left( \langle A v, v \rangle t, \langle B v, v\rangle\right) \d{t}.
    \end{align*}
    where $E(C)$ denotes a set of normalized eigenvectors of a matrix $C\in M_n(\C)$ and $\varphi$ is a smooth function with compact support that does not contain $0$.
\end{lause}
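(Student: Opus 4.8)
The plan is to \emph{compute} $\mu_{A,B}$ by inverting~\eqref{mainIdentity}, invoking the uniqueness statement of Proposition~\ref{mu_unique} to conclude that whatever measure the inversion produces must agree with $\mu_{A,B}$ away from the origin. The first step is to read off from~\eqref{mainIdentity} the one-dimensional marginals of $\mu_{A,B}$: diagonalizing $xA+yB$ with eigenvalues $\lambda_1(x,y),\dots,\lambda_n(x,y)$ and substituting $u=t\lambda_i(x,y)$ in the definition of $H$, identity~\eqref{mainIdentity} becomes
\begin{align*}
    \int_{\R^2} f(ax+by)\,\d{\mu}_{A,B}(a,b)=\sum_{i=1}^{n}\int_{0}^{1}\frac{1-t}{t}\,f\bigl(t\,\lambda_i(x,y)\bigr)\,\d{t}
\end{align*}
for every admissible $f$. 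The right-hand side is an explicit measure on $\R$; by homogeneity one may take $(x,y)=(\cos\theta,\sin\theta)$, and then it depends only on the real-analytic family $\theta\mapsto\spec(\cos\theta\,A+\sin\theta\,B)$. Thus the pushforward of $\mu_{A,B}$ under every linear functional --- equivalently, the (distributional) Radon transform of $\mu_{A,B}$ --- is known in closed form, and the whole task is to invert it.

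For the inversion I would pass to the complex plane. Applying the boxed identity to the admissible family $f_w(u)=u/(u-w)$, $w\in\C\setminus\R$, one computes $H(f_w)(\lambda)=\tfrac{\lambda-w}{\lambda}\ln\tfrac{\lambda-w}{-w}-1$ and, after two $w$-derivatives,
\begin{align*}
    \partial_w^{2}\!\left[\tr H(f_w)(xA+yB)\right]=\frac{n}{w^{2}}+\frac{1}{w}\,\tr\bigl(xA+yB-wI\bigr)^{-1},
\end{align*}
so that $\int_{\R^{2}}\tfrac{2(ax+by)}{(ax+by-w)^{3}}\,\d{\mu}_{A,B}(a,b)$ equals this explicit expression, whose only non-elementary ingredient is the resolvent trace. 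A Stieltjes--Plemelj passage $w\to\R$ recovers the marginal densities, and the two-dimensional filtered back-projection formula then recovers $\mu_{A,B}$. The crucial step is the evaluation of the remaining integral over directions: writing it as a contour integral in a complexified angle $\theta$ and deforming the contour one picks up exactly $n$ residues, one per branch $\lambda_i(\theta)$, and since $\lambda_i'(\theta)=\langle(-\sin\theta\,A+\cos\theta\,B)v_i,v_i\rangle$ is the expectation of the perpendicular pencil --- which is $bA-aB$ up to a positive scalar --- these residues organize into $(bA-aB)^{-1}$; evaluating them (after the normalization $(x,y)=(a,b)/(a^{2}+b^{2})$, so that $ax+by=1$) assembles the $n$ contributions into a sum over $i$ of imaginary parts of the eigenvalues of $\bigl(I-\tfrac{aA+bB}{a^{2}+b^{2}}\bigr)(bA-aB)^{-1}$, with the count of residues inside the contour turning the signed sum into a sum of absolute values and the Stieltjes normalization supplying the $\tfrac{1}{2\pi}$. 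This is the asserted formula for $\d{\mu}_{c}/\d{m}_{2}$. (Alternatively, one may read this derivation as a \emph{guess} and then simply verify~\eqref{mainIdentity} for the family $f_w$, again using Proposition~\ref{mu_unique}; positivity of the candidate is then manifest, the continuous density being a sum of absolute values and the singular part a positive combination of the line measures in the statement.)

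For the singular part I would isolate the finitely many exceptional directions. The matrix $bA-aB$ is singular exactly on the rays $\{b=\beta a\}$ with $\beta\in\spec(A^{-1}B)$, and these are precisely the directions along which one branch $\lambda_i(\cdot)$ vanishes identically (equivalently, where $xA+yB$ is singular); they are also the only places where the continuous density above fails to be locally integrable. Under the hypothesis that $A$ is invertible with $A^{-1}B$ having $n$ distinct eigenvalues, these $n$ rays are distinct and each carries a simple feature: localizing the back-projection near such a ray, the non-integrable part of the inversion concentrates on the perpendicular ray through $(\langle Av,v\rangle,\langle Bv,v\rangle)$, where $v\in E(A^{-1}B)$ is the corresponding eigenvector (note $Bv=\beta Av$, so this point lies on $\{b=\beta a\}$), the residual one-dimensional weight being exactly the kernel $\tfrac{1-t}{t}\,\d{t}$ of $H$ --- i.e.\ the line measure in the statement of Theorem~\ref{main4}. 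One then checks that subtracting $\mu_s$ off leaves a measure whose marginals are the locally integrable parts of the $\rho_\theta$, matching $\mu_c$, so no singular mass is left over.

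The main obstacle is the bookkeeping around the two essential singularities of $\mu_{A,B}$: the non-integrable $\sim(a^{2}+b^{2})^{-1}$ blow-up at the origin --- harmless for~\eqref{mainIdentity}, which tests only against $f$ with $f(t)/t$ locally integrable, but which must be carried as a renormalization through all the Cauchy-transform manipulations --- and the blow-up along each line $\{b=\beta a\}$, whose ``finite part'' is the continuous density while whose ``Dirac part'' is the singular measure. Making the Radon/Stieltjes inversion rigorous in the presence of both, in particular justifying the interchange of the boundary-value limit with the back-projection integral and confirming that exactly one simple line measure (and no higher-order singular term) sits on each $\{b=\beta a\}$, is where the real work lies; the distinct-eigenvalue hypothesis is exactly what keeps each of those lines a well-separated, simple feature.
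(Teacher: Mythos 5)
Your overall strategy --- read the one-dimensional marginals of $\mu_{A,B}$ off \eqref{mainIdentity} and invert the resulting Radon data, using Proposition~\ref{mu_unique} to identify whatever the inversion produces with $\mu_{A,B}$ --- is sound, and is in fact the projection-slice dual of what the paper does: the paper computes the distributional Fourier transform of $(x,y)\mapsto\tr g(xA+yB)$ in polar coordinates, which is exactly the same inversion in Fourier disguise. Your preliminary computations also check out, e.g. $H(f_w)(\lambda)=\tfrac{\lambda-w}{\lambda}\log\tfrac{\lambda-w}{-w}-1$ and $\partial_w^2\,\tr H(f_w)(xA+yB)=\tfrac{n}{w^2}+\tfrac{1}{w}\tr(xA+yB-wI)^{-1}$.

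The gap is that the two steps which actually produce the formulas in the statement are asserted rather than carried out, and they are where the content of the theorem lives. First, the ``evaluation of the remaining integral over directions'' by ``a contour integral in a complexified angle'' that picks up ``exactly $n$ residues'' whose ``count \dots turn[s] the signed sum into a sum of absolute values'' is not a computation: you do not specify the contour or the integrand, and deciding which branches $\lambda_i(\theta)$ lie inside (equivalently, how a signed residue sum becomes $\sum_i|\Im(\lambda_i(C_1(a,b)))|$) is precisely the nontrivial point. In the paper this absolute value comes from the regularized identity $I_1(\lambda,M)\to\pi|\Im(\lambda)|$ of Lemma~\ref{I1-estimate}, applied after an explicit change of variables; some quantitative input of this kind is unavoidable. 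Second, for the singular part you assert that the non-integrable residue of the inversion concentrates on each singular line with weight exactly $\tfrac{1-t}{t}\,\d{t}$ and that ``no higher-order singular term'' survives. Establishing this requires uniform asymptotics for the eigenvalues of $C_1(a,b)$ and $C_2(a,b)$ as $(a,b)$ approaches a singular line (the paper's Lemma~\ref{eigenvalue_dynamics}) combined with the error terms $E_1,E_2$ of the regularized integrals; nothing in your sketch controls these, and the $\tfrac{1-t}{t}$ weight emerges only after a cancellation between the $C_1$- and $C_2$-contributions and the $\sign$-term. Relatedly, the non-integrability of the marginals at $0$ and the blow-up of the density along the singular lines mean the filtered back-projection formula cannot be applied off the shelf; the paper handles this with a cutoff and a three-term splitting whose divergent pieces cancel. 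You correctly flag these points as ``where the real work lies'', but that work is the proof.
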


Figure \ref{fig:test} illustrates the measure $\mu_{A, B}$ for some choices of $A$ and $B$.

\subsection{Future work}\label{future_work}

Our structural result opens interesting research directions. Some of these will be pursued in our forthcoming work \cite{heinavaara2023properties}, which in particular will study the following topics:  

\begin{itemize}
    \item Geometric and regularity properties of $\mu_{c}$, $\mu_{s}$ and their supports.
    \item Further consequences and reformulations of the main identity (\ref{mainIdentity}).
    \item The structure of measures $\mu_{A, B}$ for small matrices $A$ and $B$.
    \item Existence and uniqueness of the measures $\mu_{A, B}$ for compact self-adjoint operators $A$ and $B$.
    \item Optimality results which show various ways in which our main result cannot be improved.
    \item Establishing a relationship to hyperbolic polynomials in the sense of Gårding \cite{gaarding1959inequality}.
\end{itemize}

\section{Notation and conventions}\label{conventions}

We denote the set of (complex valued) Schwartz functions on $\R^{d}$ by $\mathcal{S}(\R^{d})$, and its dual space of tempered distributions by $\mathcal{S}'(\R^{d})$. The Fourier transform of a Schwartz function is defined/normalized with
\begin{align*}
    \mathcal{F}(\varphi)(\xi) = \widehat{\varphi}(\xi) =  \frac{1}{(2 \pi)^{k/2}} \int_{x \in \R^{k}} \varphi(x) e^{-i \langle x, \xi \rangle} \d{x}.
\end{align*}
One has $\mathcal{F}(\partial^{\alpha}\varphi)(\xi) = (i \xi)^{\alpha}\mathcal{F}(\varphi)(\xi)$ for any multi-index $\alpha$. The Fourier transform of a tempered distribution $T$ is defined, as usual, using the pairing $\left(\cdot, \cdot \right) : \mathcal{S}'(\R^{d}) \times \mathcal{S}(\R^{d}) \to \C$, as
\begin{align*}
    \left(\widehat{T}, \varphi\right) = \left(T, \widehat{\varphi}\right).
\end{align*}

Let $A, B \in M_n(\C)$ be Hermitian for some fixed positive integer $n$. The multiset of the eigenvalues of $A$ are denoted by $\lambda(A) = (\lambda_{1}(A), \lambda_{2}(A), \ldots, \lambda_{n}(A))$; the ordering is not important for us. The corresponding set of normalized eigenvectors is denoted by $E(A)$. While these vectors are not unique, we make sure to respect this ambiguity.

We will also make use of notions from the theory of matrix pencils (see for instance \cite{ikramov1993matrix}). Given $A$ and $B$ as before, we say that the pair/pencil $(A, B)$ is non-degenerate if some linear combination of $A$ and $B$ is invertible, i.e. $\det(b A - a B)$ is not zero for every $(a, b) \in \R^{2}$. In this case, the determinant has exactly $n$ roots (with multiplicity) in $\C \mathbb{P}^{1}$, which we will call the roots of the pencil $(A, B)$. Real roots, which we interpret as lines in $\R^{2}$, are called \textit{singular lines}. If a root $(a, b)$ is simple, we may define the corresponding (normalized) eigenvector as the vector in the kernel of $b A - a B$. The set of such vectors (for simple roots) is denoted by $E(A, B)$. Again, these vectors are not unique, but we will respect this ambiguity.

\def\heightRatio{0.30}

\renewcommand\thesubfigure{\arabic{subfigure}}

\begin{figure}
\centering
\begin{subfigure}[t]{.45\textwidth}
  \centering
  \includegraphics[width=1.0\linewidth, height=\heightRatio\paperheight]{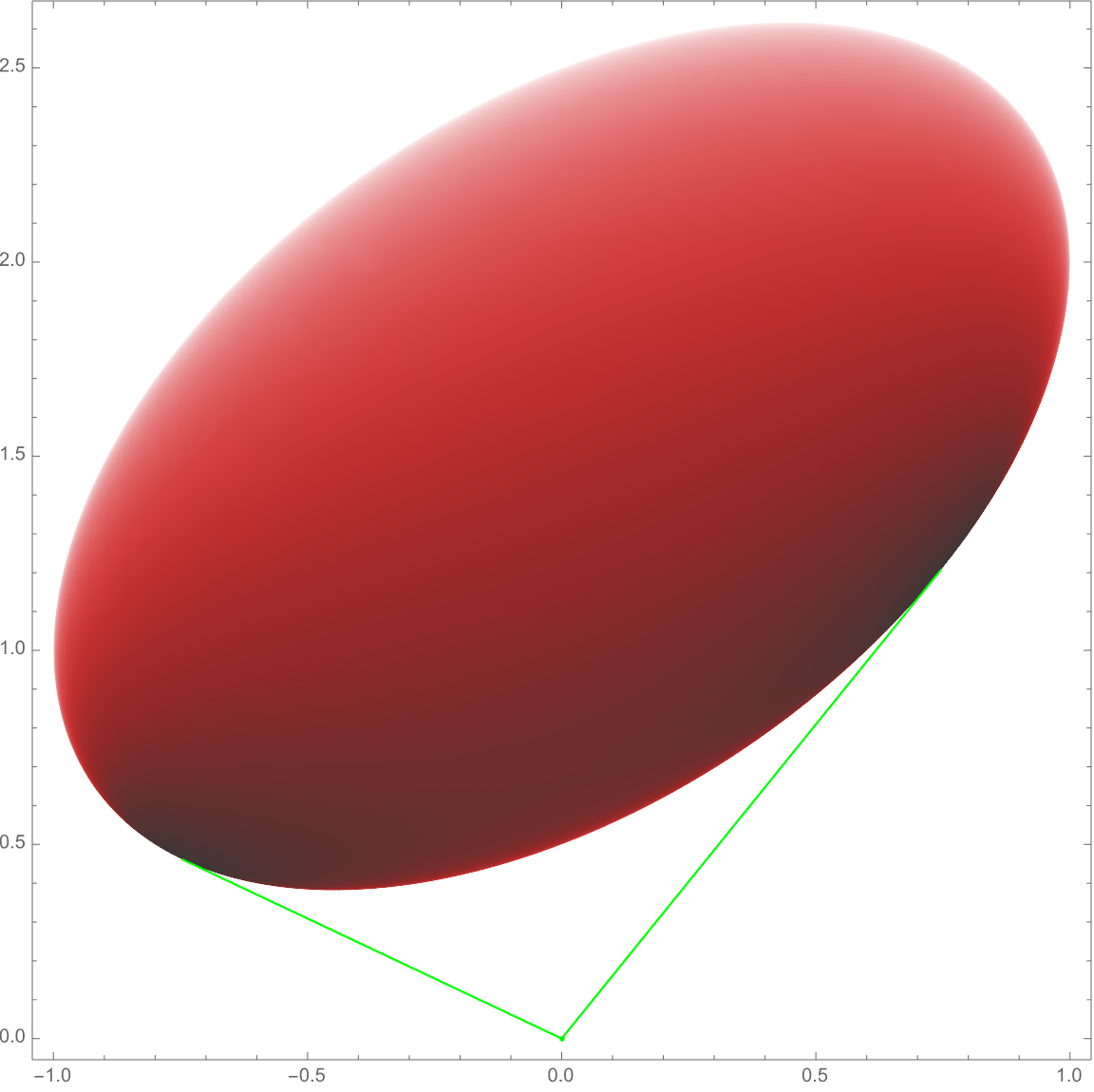}
  \caption{$\left(\begin{bmatrix}
    1 & 0 \\
    0 & -1
  \end{bmatrix}, \begin{bmatrix}
    2 & -1 \\
    -1 & 1
  \end{bmatrix}\right)$.}
  \label{fig:sub1}
\end{subfigure}%
\qquad
\begin{subfigure}[t]{.45\textwidth}
  \centering
  \includegraphics[width=1.0\linewidth, height=\heightRatio\paperheight]{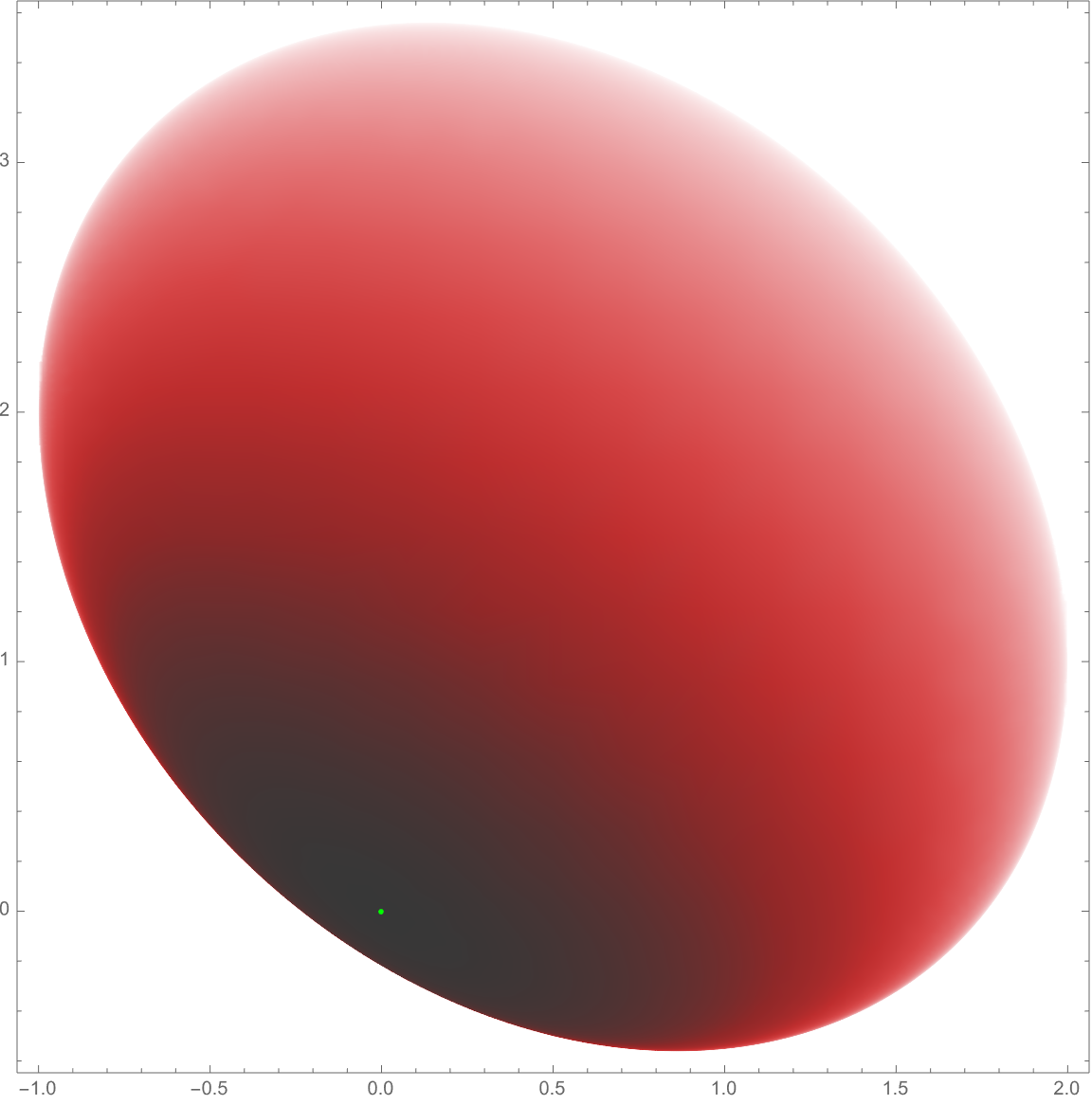}
  \caption{$\left(\begin{bmatrix}
    2 & 0 \\
    0 & -1
  \end{bmatrix}, \begin{bmatrix}
    1 & -2 \\
    -2 & 2
  \end{bmatrix}\right)$.}
  \label{fig:sub2}
\end{subfigure}

\begin{subfigure}[t]{.45\textwidth}
  \centering
  \includegraphics[width=1.0\linewidth, height=\heightRatio\paperheight]{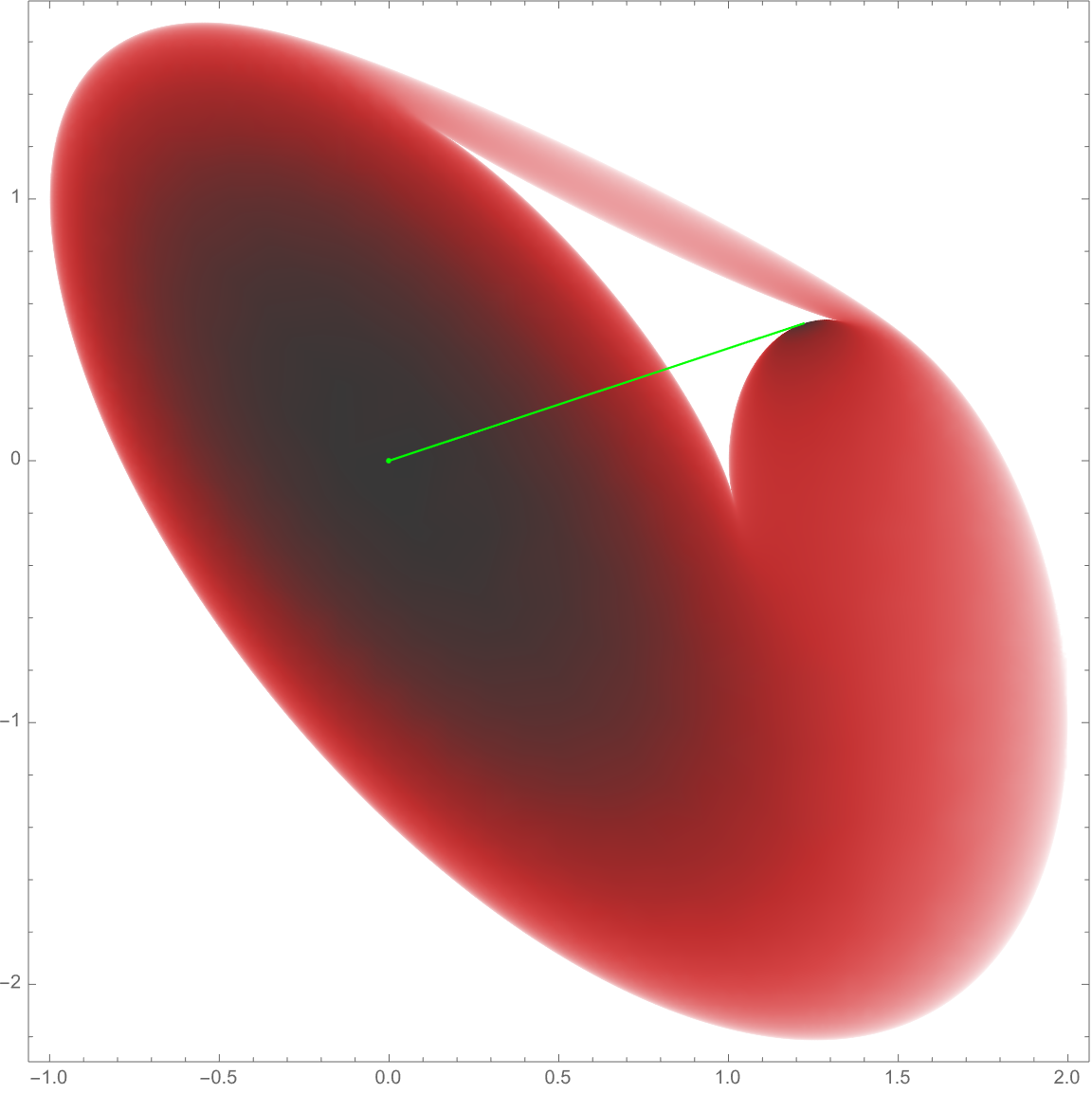}
  \caption{$\left(\begin{bmatrix}
    1 & 0 & 0 \\
    0 & 2 & 0 \\
    0 & 0 & -1
  \end{bmatrix}, \begin{bmatrix}
    0 & 1 & 1 \\
    1 & -1 & -1 \\
    1 & -1 & 1
  \end{bmatrix}\right)$.}
  \label{fig:sub3}
\end{subfigure}%
\qquad
\begin{subfigure}[t]{.45\textwidth}
  \centering
  \includegraphics[width=1.0\linewidth, height=\heightRatio\paperheight]{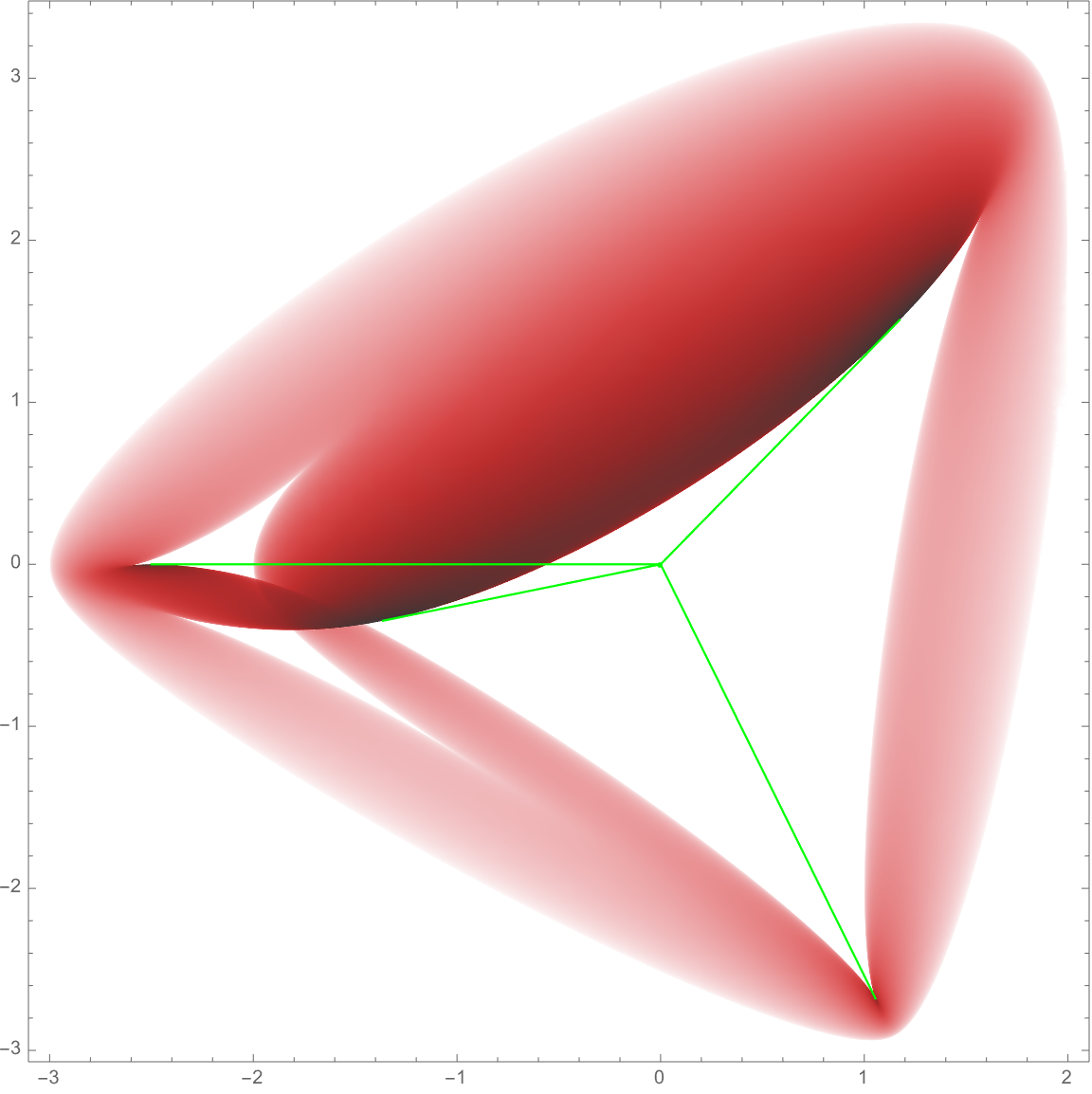}
  \caption{$\left(\begin{bmatrix}
    -3 & 0 & 0 & 0 \\
    0 & -2 & 0 & 0 \\
    0 & 0 & 1 & 0 \\
    0 & 0 & 0 & 2
  \end{bmatrix}, \begin{bmatrix}
    0 & 0 & 0 & -1 \\
    0 & 0 & 0 & -1 \\
    0 & 0 & -2 & -2 \\
    -1 & -1 & -2 & 2
  \end{bmatrix}\right)$.}
  \label{fig:sub4}
\end{subfigure}
\caption{Four illustrations of the measures $\mu_{A, B}$ for the pairs of matrices $(A, B)$ listed below the pictures. The horizontal and vertical axes correspond to $a$ and $b$ respectively. The density of $\mu_{c, A, B}$ is represented with the color running from white (zero) to black (infinity) through red. The green line segments depict the support of the singular part, $\mu_{s, A, B}$.}
\label{fig:test}
\end{figure}

\section{Proofs of the main results}\label{main_proofs}

In this section we will prove slight refinements of Theorems \ref{main3} and \ref{main4}, Theorems \ref{main2} and \ref{main1}.

We start by defining a helpful function $g : \R \to \C$ by
\begin{align}
    g(x) &= \int_{0}^{1} \frac{e^{i x t} - 1}{t} (1 - t) \d{t} = H(y \mapsto e^{i y} - 1)(x). \label{gdef}
\end{align}
Observe that $g$ is smooth, $g(0) = 0$, and $g(x) = O(\log|x|)$ at infinity.

For any two Hermitian $A, B \in M_n(\C)$, define a continuous function $G$ with
\begin{align}
    G := G_{A, B} : \R^{2} &\to \C \label{Gdef} \\
    (x, y) &\to \tr g(x A + y B) \nonumber
\end{align}

We will prove that $\mu_{A, B}$ is essentially the Fourier transform of $G_{A, B}$.

\begin{lause}\label{main1}
    Let $A, B \in M_n(\C)$ be Hermitian. Then there exists a positive measure $\mu_{A, B}$ which agrees with $\widehat{G}_{A, B}$ (see \ref{Gdef}) away from $0$, in the sense that if $\varphi$ is any Schwartz function with compact support not containing $0$, then
    \begin{align*}
        \left(\mu_{A, B}, \varphi\right) = \frac{1}{2 \pi}\left(\widehat{G}_{A, B}, \varphi\right).
    \end{align*}
    Denote by $\mu_{A, B} = \mu_{c} + \mu_{s}$ the Lebesgue decomposition of $\mu_{A, B}$ w.r.t. Lebesgue measure $m_{2}$ ($\mu_{c} \ll m_{2}$, $\mu_{s} \perp m_{2}$). If the pencil $(A, B)$ is non-degenerate, then the continuous part $\mu_{c}$ is given by
    \begin{align}\label{continuous_part}
        \frac{\d{\mu}_{c}}{\d{m}_{2}}(a, b) = \frac{1}{2 \pi}\sum_{i = 1}^{n} \left|\Im\left( \lambda_{i}\left(\left(I - \frac{a A + b B}{a^{2} + b^{2}}\right) (b A - a B)^{-1} \right)\right) \right|.
    \end{align}
    If the pencil $(A, B)$ is non-degenerate and the real roots of $(A, B)$ are distinct, then the singular part $\mu_{s}$ satisfies
    \begin{align}\label{singular_part}
        \mu_{s}(\varphi) = \sum_{v \in E(A, B)} \int_{0}^{1} \frac{1 - t}{t} \varphi\left( \langle A v, v \rangle t, \langle B v, v\rangle t\right) \d{t},
    \end{align}
    where $\varphi$ is Schwartz function with compact support not containing $0$.
\end{lause}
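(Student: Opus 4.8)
The plan is to compute $\widehat G_{A,B}$ essentially by hand, isolating its behaviour away from the origin. Write $G := G_{A,B}$. One first notes that $G$ is a continuous tempered distribution: since $g(u) = O(\log(2 + |u|))$ and every eigenvalue of $xA + yB$ has absolute value at most $(\|A\| + \|B\|)\,|(x,y)|$, we get $|G(x,y)| = O(\log(2 + |(x,y)|))$, so $G \in \mathcal S'(\R^2)$. The main reduction uses the integral representation of $g$. Setting $\sigma(x,y) := \tr e^{i(xA+yB)}$ and observing $\tr e^{it(xA+yB)} = \sigma(tx, ty)$, we have $G(x,y) = \int_0^1 \frac{1-t}{t}\bigl(\sigma(tx, ty) - n\bigr)\,dt$. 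Pairing with a Schwartz function $\varphi$ whose support misses $0$: the constant $-n$ contributes nothing (its Fourier transform is supported at $0$), and Fubini is justified because the Taylor expansion of $\sigma(tx,ty) - n$ at $t = 0$ consists of homogeneous polynomials in $(x,y)$, each of which pairs with $\widehat\varphi$ to produce a derivative of $\varphi$ at the origin, hence $0$ — so the relevant integrand vanishes to infinite order as $t \to 0$ and beats the factor $\frac1t$. After the substitution $u = 1/t$ this yields, away from $0$,
\[
    \widehat G(\zeta) \;=\; \int_1^{\infty} (u-1)\,\widehat{\sigma}(u\zeta)\, du .
\]
Furthermore $\widehat\sigma$ is compactly supported: for complex $x,y$ one has $|\tr e^{i(xA+yB)}| \le n\,e^{|x|\|A\| + |y|\|B\|}$, so $\sigma$ is entire of exponential type and Paley--Wiener--Schwartz applies. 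Hence the $u$-integral runs over a bounded range for each $\zeta \ne 0$ (so $\widehat G$ itself is compactly supported), and everything is reduced to analysing the single compactly supported distribution $\widehat\sigma$, the planar Fourier transform of $(x,y) \mapsto \sum_j e^{i\lambda_j(x,y)}$ with $\lambda_j(x,y)$ the eigenvalues of $xA + yB$.

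The heart of the argument, and its main obstacle, is the explicit evaluation of $\widehat\sigma$, which is governed by the spectral curve $\mathcal C = \{[x:y:z] \in \C\mathbb{P}^2 : \det(zI - xA - yB) = 0\}$. One may represent $\tr e^{iM} = \frac1{2\pi i}\oint e^{iz}\tr(zI-M)^{-1}\,dz$ with $\tr(zI-M)^{-1} = \partial_z \log\det(zI - xA - yB)$ and push the Fourier transform through the characteristic polynomial (homogeneous of degree $n$), or equivalently work ray by ray from $\sigma(r\omega) = \sum_j e^{ir\mu_j(\theta)}$, with $\mu_j(\theta)$ the eigenvalues of $\cos\theta\,A + \sin\theta\,B$, Fourier-transforming the radial oscillatory integrals; the stationary-phase geometry concentrates $\widehat\sigma$ along the image of the (ray-constant) Gauss map $\theta \mapsto \nabla\lambda_j(\theta) = (\langle A v_j(\theta), v_j(\theta)\rangle, \langle B v_j(\theta), v_j(\theta)\rangle)$. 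Two kinds of contribution emerge. Where $\mathcal C$ is smooth and the Gauss map is an immersion, $\widehat\sigma$ carries a finite-part density along these arcs; the dilation integral $\widehat G(\zeta) = \int_1^\infty (u-1)\widehat\sigma(u\zeta)\,du$ then ``thickens'' this into the absolutely continuous part of $\widehat G$, and executing the change of variables (the implicit function theorem supplying the dilation rate) produces exactly the density \eqref{continuous_part}: the pencil matrix $bA - aB$ appears because it controls tangency to $\mathcal C$ in the direction $(a,b)$, and $\sum_i |\Im\lambda_i(N(a,b))|$, with $N = \bigl(I - \tfrac{aA+bB}{a^2+b^2}\bigr)(bA-aB)^{-1}$, records the rate at which the rescaled dual arcs sweep across $(a,b)$. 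Separately, the points of $\mathcal C$ on the line at infinity, $\{z=0\} \cap \mathcal C = \{[b_0 : -a_0 : 0] : \det(b_0 A - a_0 B) = 0\}$, correspond exactly to the roots of the pencil $(A,B)$; projective duality carries the real ones to the singular lines $\R(a_0,b_0)$, and each simple such root with eigenvector $w$ — so that $(\langle Aw,w\rangle, \langle Bw,w\rangle) \parallel (a_0,b_0)$ — contributes a piece of $\widehat\sigma$ supported in that direction, which the dilation integral turns into the segment-supported measure $\int_0^1 \frac{1-t}{t}\varphi(\langle Aw,w\rangle\,t, \langle Bw,w\rangle\,t)\,dt$ of \eqref{singular_part}. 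The difficulties here are genuinely technical: the $e^{i\lambda_j}$ are only piecewise real-analytic and their level sets may have corners, $\widehat\sigma$ is a finite-part distribution rather than a measure, and one must prove that the dilation integral regularizes it into precisely a positive measure with no extraneous singular components, all while pinning down the densities above.

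Granting the formulas, positivity of $\mu_{A,B}$ is immediate: the density $\tfrac1{2\pi}\sum_i |\Im\lambda_i(N(a,b))|$ is manifestly nonnegative, and on the singular lines the weight $\tfrac{1-t}{t}$ is positive on $(0,1)$. Thus $\tfrac1{2\pi}\widehat G$, restricted to $\R^2\setminus\{0\}$, is a positive, locally finite, compactly supported Borel measure, $\sigma$-finite overall (only its order-$|(a,b)|^{-2}$ behaviour near $0$ can fail to be finite), so it extends to a positive Borel measure $\mu_{A,B}$ on $\R^2$ which agrees with $\tfrac1{2\pi}\widehat G$ away from $0$ by construction; this is the measure claimed. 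Lastly, non-degeneracy of the pencil is needed only to make $N(a,b)$ meaningful and distinctness of the real roots only to make $E(A,B)$ meaningful; degenerate pencils and coincident real roots are handled by perturbation, approximating $(A,B)$ by nearby pairs with non-degenerate pencil and simple spectral curve and passing to the limit in the relevant pairings, which by the reduction above control $\widehat G$ and hence $\mu_{A,B}$.
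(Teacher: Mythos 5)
Your reformulation—writing $G(x,y)=\int_0^1\frac{1-t}{t}\bigl(\sigma(tx,ty)-n\bigr)\,dt$ with $\sigma(x,y)=\tr e^{i(xA+yB)}$, then deducing the dilation identity $\widehat G(\zeta)=\int_1^\infty(u-1)\widehat\sigma(u\zeta)\,du$ away from the origin—is a clean and correct reorganization, and the observation that $\widehat\sigma$ is compactly supported by Paley--Wiener--Schwartz is a nice way to localize the analysis. This reshuffles, but ultimately encodes, the same information the paper extracts by working in polar coordinates with the one-dimensional distributional Fourier transform of $|r|g(r)$ (Lemma~\ref{fourier_lemma}) followed by a cutoff and change of variables.

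The problem is that you stop precisely where the proof actually is. Deriving the explicit absolutely continuous density $\frac{1}{2\pi}\sum_i|\Im\lambda_i(C_1(a,b))|$ in~\eqref{continuous_part} and the segment-supported singular part~\eqref{singular_part} is the entire content of the theorem, and your account of this is a catalogue of plausible keywords (stationary phase on $\sigma(r\omega)=\sum_j e^{ir\mu_j(\theta)}$, the Gauss map $\theta\mapsto(\langle Av_j,v_j\rangle,\langle Bv_j,v_j\rangle)$, projective duality to the line at infinity, ``the implicit function theorem supplying the dilation rate'') without a single line of computation producing the factor $(bA-aB)^{-1}$, the shift by $\tfrac{aA+bB}{a^2+b^2}$, or the appearance of $|\Im\lambda_i|$. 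You acknowledge this yourself when you write that the difficulties are ``genuinely technical'' and list exactly the points one must prove. In the paper, this is where the work is: the error estimates of Lemmas~\ref{I1-estimate} and~\ref{I2-estimate} control the divergent pieces, and Lemma~\ref{eigenvalue_dynamics} tracks the large eigenvalue near a singular line to isolate the singular part. None of that machinery has a counterpart in your sketch, so the density formulas are asserted rather than proved.

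This matters doubly because your positivity argument leans entirely on the formulas: ``granting the formulas, positivity is immediate.'' Without them, you have not shown $\frac{1}{2\pi}\widehat G$ is even a measure away from $0$, let alone a positive one; the paper likewise obtains positivity only after establishing the explicit expressions in the generic (non-degenerate, simple-roots) case and then passing to a weak limit under perturbation. Your closing paragraph correctly describes the perturbation step, but it is hanging on an unproven foundation. To turn this into a proof, you would need to actually compute $\widehat\sigma$ (or $\widehat G$) as a distribution and verify that the dilation integral produces exactly the claimed continuous and singular densities—which is the substance of the paper's argument and is missing here.
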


We note that for any $v \in E(A, B)$ corresponding to a root $(a,b)$, $b \langle A v, v\rangle - a \langle B v, v\rangle = 0$, and $\langle A v, v\rangle$ and $\langle B v, v \rangle$ are real. If $v$ further corresponds to a non-real root, then $\langle A v, v\rangle = 0 = \langle B v, v\rangle$. Thus, the sum in (\ref{singular_part}) can equivalently be taken over all eigenvectors of $(A, B)$ that correspond to real roots.

The points $(\langle A v, v\rangle, \langle B v, v\rangle)$ where $v \in E(A, B)$ are called the \textit{singular points}, and they lie on the singular lines, as defined in section \ref{conventions}.

\begin{huom}\label{C_remark}
    There is nothing particularly special about the expression
    \begin{align*}
        C := \left(I - \frac{a A + b B}{a^{2} + b^{2}}\right) (b A - a B)^{-1}.
    \end{align*}
    Since eigenvalues of $C$ and $C + t I$ have equal imaginary parts, we may replace $C$ by anything of the form
    \begin{align*}
        \left(I - f(a, b) A - g(a, b) B\right) (b A - a B)^{-1}
    \end{align*}
    where $f(a, b) a + g(a, b) b = 1$. The expression we chose has the desirable property of making sense for every $(a, b) \neq (0, 0)$, and works well with the change of variables in the proof.
\end{huom}

Before proving Theorem \ref{main1}, we will give a mock proof illustrating our strategy. The major unsound steps are indicated by numbered asterisks. We will comment on how to fix them afterwards.

\begin{varproof}[Mock proof of Theorem \ref{main1}]
    Our goal is to calculate the Fourier transform of $G$. By definition \ref{mistake1} we have
    \begin{align*}
        \widehat{G}(a, b) = \frac{1}{2 \pi}\int_{\R^{2}} e^{-i (a x + b y)} \tr g(x A + y B) \d{x} \d{y} = \frac{1}{2 \pi}\sum_{i = 1}^{n} \int_{\R^{2}} e^{-i (a x + b y)} g(\lambda_{i}(x A + y B)) \d{x} \d{y}.
    \end{align*}
    Rewriting the integral in polar coordinates, we get
    \begin{align*}
        =& \frac{1}{2 \pi}\sum_{i = 1}^{n} \int_{0}^{\pi} \int_{\R} e^{-i r (a \cos(\theta) + b \sin(\theta))} |r| g(\lambda_{i}(\cos(\theta) A + \sin(\theta) B) r) \d{r} \d{\theta} \\
        =& \frac{1}{2 \pi}\sum_{i = 1}^{n} \int_{0}^{\pi} \frac{1}{\lambda_{i}(\cos(\theta) A + \sin(\theta) B)^{2}} \left(\int_{\R} e^{-i r \frac{a \cos(\theta) + b \sin(\theta)}{\lambda_{i}(\cos(\theta) A + \sin(\theta) B)}} |r| g(r) \d{r}\right)\d{\theta} \\
        =& \frac{1}{\sqrt{2 \pi}}\sum_{i = 1}^{n} \int_{0}^{\pi} \frac{\mathcal{F}(x \mapsto |x| g(x))\left(\frac{a \cos(\theta) + b \sin(\theta)}{\lambda_{i}(\cos(\theta) A + \sin(\theta) B)}\right)}{\lambda_{i}(\cos(\theta) A + \sin(\theta) B)^{2}} \d{\theta}.
    \end{align*}
    It turns out that the Fourier transform of $|x|g(x)$ \ref{mistake2} equals
    \begin{align}\label{wrong_fourier}
        \xi \mapsto \sqrt{\frac{2}{\pi}} \frac{1}{\xi^{2}} \log\left|1 - \frac{1}{\xi}\right|,
    \end{align}
    so plugging this in, and multiplying the eigenvalues to obtain the determinant, allows us to simplify to
    \begin{align*}
        =& \frac{1}{\pi} \sum_{i = 1}^{n} \int_{0}^{\pi} \frac{\log \left|1 - \frac{\lambda_{i}(\cos(\theta) A + \sin(\theta) B)}{a \cos(\theta) + b \sin(\theta)}\right|}{(a \cos(\theta) + b \sin(\theta))^{2}} \d{\theta} = \frac{1}{\pi} \int_{0}^{\pi} \frac{\log \left|\det\left(I - \frac{\cos(\theta) A + \sin(\theta) B}{a \cos(\theta) + b \sin(\theta)}\right)\right|}{(a \cos(\theta) + b \sin(\theta))^{2}} \d{\theta}.
    \end{align*}
    Making a change of variable
    \begin{align*}
        t = \frac{1}{a^{2} + b^{2}}\frac{a \cos(\theta) + b \sin(\theta)}{b \cos(\theta) - a \sin(\theta)}
    \end{align*}
    transforms \ref{mistake3} the integral to
    \begin{align*}
        \frac{1}{\pi} \int_{-\infty}^{\infty} \log \left|\det\left(I - \frac{a A + b B}{a^{2} + b^{2}} + t (b A - a B)\right)\right| \d{t}.
    \end{align*}
    Finally, we factorize the determinant and use the fact \ref{mistake4} that
    \begin{align}\label{fake_I1}
        \int_{-\infty}^{\infty} \log|a + b t| \d{t} = \pi \left|\Im\left(\frac{a}{b}\right)\right|
    \end{align}
    to prove the identity (\ref{continuous_part}).
\end{varproof}
\begin{enumerate}[label=(\arabic*$\ast$)]
    \item \label{mistake1} The function $G_{A, B}$ is not integrable, so we will instead calculate its Fourier transform as a distribution, testing against a Schwartz function $\varphi$.
    \item \label{mistake2} The function $|x| g(x)$ does not have a Fourier transform in the usual sense, and in any case (\ref{wrong_fourier}) is not correct. In Lemma \ref{fourier_lemma}, we calculate the correct Fourier transform as a tempered distribution, which is similar to (\ref{wrong_fourier}) but contains some corrections terms.
    \item \label{mistake3} The integrals at hand are not integrable. Instead, we apply a cutoff, split the integral to three parts, and apply a change of variables to each part.
    \item \label{mistake4} The identity (\ref{fake_I1}) is not true, but instead we employ a similar looking identity from Lemma \ref{I1-estimate}.
\end{enumerate}

Additionally, this mock proof doesn't see the singular part. It is hidden (together with the terms making the expressions converge) in the correction terms of the Fourier transform of $|x| g(x)$. These  terms bring complications, as we will need to understand the behaviour of the eigenvalues of $C$ (as in Remark \ref{C_remark}) near the singular lines. These eigenvalue estimates are done in Lemma \ref{eigenvalue_dynamics}.

\begin{proof}[Proof of Theorem \ref{main1}]
Our goal is to calculate the Fourier transform of $G = G_{A, B}$. Fix a Schwartz function $\varphi$. We can rewrite our integral in polar coordinates,
\begin{align*}
    (\widehat{G}, \varphi) &= (G, \widehat{\varphi}) = \int_{\R^{2}} \tr g(x A + y B) \widehat{\varphi}(x, y) \d{x} \d{y} \\
    &= \int_{0}^{\pi} \sum_{i = 1}^{n} \left(\int_{\R} |r| g(\lambda_{i}(\cos(\theta) A + \sin(\theta) B) r) \widehat{\varphi}(r \cos(\theta), r \sin(\theta)) \d{r} \right) \d{\theta}.
\end{align*}
Let $\lambda = \lambda_{i}(\cos(\theta) A + \sin(\theta) B)$. The inner integral vanishes when $\lambda = 0$, and when $\lambda \neq 0$ it equals
\begin{align}
    \int_{\R} |r| g(\lambda r) \widehat{\varphi}(r \cos(\theta), r \sin(\theta)) \d{x} &= \frac{1}{\lambda^{2}}\int_{\R} |r| g(r) \widehat{\varphi}\left(\frac{r \cos(\theta)}{\lambda}, \frac{r \sin(\theta)}{\lambda}\right) \d{r}. \label{phi_integral}
\end{align}
Let $\overline{v}_{\theta} = (\cos(\theta), \sin(\theta))$, $\overline{u}_{\theta} = (-\sin(\theta), \cos(\theta))$, and $\varphi(\overline{w}) = \varphi(\overline{w}_{1}, \overline{w}_{2})$, for any $\overline{w} \in \R^{2}$. By direct calculation, one sees that the term in the integrand
\begin{align*}
    r \mapsto \widehat{\varphi}\left(\frac{r\cos(\theta)}{\lambda}, \frac{r \sin(\theta)}{\lambda}\right)
\end{align*}
is the Fourier transform of the function mapping $x$ to
\begin{align*}
    \frac{\lambda^{2}}{\sqrt{2 \pi}} \int_{\R} \varphi(\lambda x \cos(\theta) - \lambda y \sin(\theta), \lambda x \sin(\theta) + \lambda y \cos(\theta)) \d{y} = \frac{\lambda^{2}}{\sqrt{2 \pi}}  \int_{\R} \varphi(\lambda x \overline{v}_{\theta} + \lambda y \overline{u}_{\theta}) \d{y} =: \frac{\lambda^{2}}{\sqrt{2 \pi}} \phi_{\theta, \lambda}.
\end{align*}
Observe also that $\phi_{\theta, \lambda}(x) = \phi_{\theta, 1}(\lambda x)/|\lambda| =: \phi_{\theta}(\lambda x)/|\lambda|$. Consequently, the integral in (\ref{phi_integral}) simplifies to
\begin{align*}
    \frac{1}{\lambda^{2}}\int_{\R} |r| g(r) \widehat{\varphi}\left(\frac{r}{\lambda} \overline{v}_{\theta}\right) \d{r} &= \frac{1}{\sqrt{2 \pi}} \int_{\R} |r| g(r) \widehat{\phi_{\theta, \lambda}}(r) \d{r} = \frac{1}{\sqrt{2 \pi}} \left(\widehat{|r| g(r)}, \phi_{\theta, \lambda}\right),
\end{align*}
where by $\widehat{|r| g(r)}$ we mean the Fourier transform of the tempered distribution $r \mapsto |r| g(r)$.

\begin{lem}\label{fourier_lemma}
    For any Schwartz function $\phi$ on $\R$, one has
    \begin{align}\label{distributional_identity}
        (\widehat{|x| g(x)}, \phi) = \sqrt{\frac{2}{\pi}} \int_{\R} (\phi(x) - \phi(0) - \phi'(0) x) \frac{1}{x^{2}}\log \left|1 - \frac{1}{x}\right| \d{x}.
    \end{align}
\end{lem}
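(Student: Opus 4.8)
The plan is to compute $\widehat{|x|g}$ by peeling off $g$, reducing to the (standard) Fourier transform of $|x|$, and then extracting the regularization terms by hand. First, since $|x|g(x)=O(|x|\log|x|)$ is tempered, $\langle\widehat{|x|g},\phi\rangle=\int_{\R}|x|g(x)\widehat\phi(x)\,\d x$. I would substitute $g(x)=\int_0^1\tfrac{1-t}{t}(e^{ixt}-1)\,\d t$, apply Fubini (routine, using $|e^{ixt}-1|\le\min\{2,|x|t\}$), and use $e^{ixt}\widehat\phi(x)=\widehat{\phi(\cdot+t)}(x)$ to rewrite each factor $\int|x|e^{ixt}\widehat\phi(x)\,\d x$ as $\langle\widehat{|x|},\phi(\cdot+t)\rangle$. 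Pulling the $t$-integral through the continuous linear functional $\widehat{|x|}$ — the map $t\mapsto\tfrac{1-t}{t}(\phi(\cdot+t)-\phi(\cdot))$ extends continuously to $[0,1]$ as an $\mathcal S(\R)$-valued function, with value $\phi'$ at $t=0$ — yields
\[
  \langle\widehat{|x|g},\phi\rangle=\big\langle\widehat{|x|},\,\Theta\big\rangle,\qquad \Theta(\xi):=\int_0^1\frac{1-t}{t}\big(\phi(\xi+t)-\phi(\xi)\big)\,\d t\in\mathcal S(\R).
\]

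Next, I would invoke the standard formula $\langle\widehat{|x|},\psi\rangle=-\sqrt{2/\pi}\,\mathrm{p.v.}\!\int_{\R}\psi'(\xi)/\xi\,\d\xi$ (derivable from $\widehat{|x|}=\lim_{\varepsilon\downarrow0}\widehat{|x|e^{-\varepsilon|x|}}$, or from $\xi^2\widehat{|x|}=-\sqrt{2/\pi}$ together with evenness). With $\psi=\Theta$ and $\Theta'(\xi)=\int_0^1\tfrac{1-t}{t}(\phi'(\xi+t)-\phi'(\xi))\,\d t$, I would interchange integrals, use $\mathrm{p.v.}\!\int\tfrac{\phi'(\xi+t)-\phi'(\xi)}{\xi}\,\d\xi=t\,\mathrm{p.v.}\!\int\tfrac{\phi'(\eta)}{\eta(\eta-t)}\,\d\eta$, interchange once more, and compute $\int_0^1\tfrac{1-t}{\eta-t}\,\d t=1-(1-\eta)\log|1-\tfrac1\eta|$. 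This should produce
\[
  \langle\widehat{|x|g},\phi\rangle=-\sqrt{\tfrac2\pi}\;\mathrm{p.v.}\!\int_{\R}\phi'(\eta)\,P(\eta)\,\d\eta,\qquad P(\eta):=\frac1\eta\Big(1-(1-\eta)\log\big|1-\tfrac1\eta\big|\Big),
\]
and a one-line check gives $P'(\eta)=\eta^{-2}\log|1-\tfrac1\eta|$; note that $P(\eta)=O(\eta^{-2})$ at infinity, $P$ is continuous through $\eta=1$, and near $0$ one has $P(\eta)=\tfrac{\log|\eta|}{\eta}+\tfrac1\eta+O(\log|\eta|)$.

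Then two facts finish it. (i) $\mathrm{p.v.}\!\int_{\R}P=0$: from $P(\eta)=\tfrac1\eta\int_0^1\tfrac{1-t}{\eta-t}\,\d t$, exchanging the principal values gives $\int_0^1(1-t)\,\mathrm{p.v.}\!\int_{\R}\tfrac{\d\eta}{\eta(\eta-t)}\,\d t$, and $\mathrm{p.v.}\!\int_{\R}\tfrac{\d\eta}{\eta(\eta-t)}=\tfrac1t\big(\mathrm{p.v.}\!\int\tfrac{\d\eta}{\eta-t}-\mathrm{p.v.}\!\int\tfrac{\d\eta}{\eta}\big)=0$ for $t\ne0$. (ii) Since $P$ has only a $\tfrac{\log|\eta|}{\eta}+\tfrac1\eta$ singularity at the origin, $\int_{\R}(\phi'(\eta)-\phi'(0))P(\eta)\,\d\eta$ converges absolutely, and integrating by parts against $P'(\eta)=\eta^{-2}\log|1-\tfrac1\eta|$ — the boundary terms vanish because $\phi(\eta)-\phi(0)-\phi'(0)\eta=O(\eta^2)$ near $0$ and $O(|\eta|)$ at infinity — gives $\int_{\R}(\phi'-\phi'(0))P=-\int_{\R}(\phi(\eta)-\phi(0)-\phi'(0)\eta)\,\eta^{-2}\log|1-\tfrac1\eta|\,\d\eta$. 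Combining with $\mathrm{p.v.}\!\int\phi'P=\phi'(0)\,\mathrm{p.v.}\!\int P+\int(\phi'-\phi'(0))P=\int(\phi'-\phi'(0))P$ and the displayed expression from the previous step yields exactly~(\ref{distributional_identity}).

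The hard part is not any single computation but making the regularization rigorous: heuristically $\widehat{|x|g}$ ``is'' $\sqrt{2/\pi}\,\xi^{-2}\log|1-\xi^{-1}|$ (as in the mock proof), but this density is non-integrable at $\xi=0$, so all the content sits in the correction terms. Concretely, each of the several interchanges of order of integration above is between principal-value or merely conditionally convergent integrals and must be justified — I expect to do so by replacing each $\mathrm{p.v.}$ integral with a symmetric truncation, using the Schwartz decay of $\phi,\Theta,\Theta'$ and the boundedness of $\int_0^1\tfrac{1-t}{\eta-t}\,\d t$ for $\eta\in(0,1)$, and then passing to the limit — and the identity $\mathrm{p.v.}\!\int P=0$ is precisely what pins the correction down to $-\phi(0)-\phi'(0)\xi$ (any slack there would amount to adding multiples of $\delta_0$ and $\delta_0'$ to $\widehat{|x|g}$). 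Beyond that, the remaining work is careful bookkeeping of these principal values and of the numerical constants.
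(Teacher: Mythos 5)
Your proposal is correct in outline, but it takes a genuinely different route from the paper. The paper's proof is shorter and more structural: it observes that $-\partial_x^2\bigl(|x|g(x)\bigr)=\frac{1-e^{ix}}{|x|}$ is a locally integrable function whose Fourier transform $\sqrt{2/\pi}\,\log|1-1/\xi|$ is available in the literature; testing against $\phi=x^2\psi$ then yields the identity instantly, and the residual ambiguity of $\alpha\delta_0+\beta\delta_0'$ is killed by testing against the one-parameter family $\phi_{\varepsilon,\alpha,\beta}(x)=(\alpha+\beta x)e^{-\varepsilon^2x^2/2}$ and letting $\varepsilon\to 0^+$. You instead keep $g$ in its integral form, reduce to $\bigl\langle\widehat{|x|},\Theta\bigr\rangle$ with $\Theta(\xi)=\int_0^1\frac{1-t}{t}(\phi(\xi+t)-\phi(\xi))\,\d t$, feed in the classical formula $\langle\widehat{|x|},\psi\rangle=-\sqrt{2/\pi}\,\mathrm{p.v.}\!\int\psi'/\xi$, and then perform an explicit $t$-by-$\eta$ Fubini to arrive at $-\sqrt{2/\pi}\,\mathrm{p.v.}\!\int\phi'P$ with $P'(\eta)=\eta^{-2}\log|1-1/\eta|$; your observation that $\mathrm{p.v.}\!\int P=0$ replaces the paper's Gaussian-family step in pinning down the $\delta_0,\delta_0'$ corrections. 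The trade-off is clear: your argument is self-contained (needing only the textbook $\widehat{|x|}$, not the cited transform of $\frac{1-e^{ix}}{|x|}$), but you pay for it by having to justify several interchanges between a Lebesgue integral in $t$ and principal-value integrals in $\xi$ or $\eta$, including one where the location of the $\eta$-singularity varies with $t$. You flag this honestly, and the symmetric-truncation strategy you propose does work (the moving singularity is harmless because $\frac{t\phi'(\eta)}{\eta(\eta-t)}$ is jointly locally integrable off the diagonal and $\mathrm{p.v.}$-limits of the truncated double integrals agree in either order here), but those justifications are the bulk of the work and would need to be written out for a complete proof. I also checked your computations of $\int_0^1\frac{1-t}{\eta-t}\,\d t=1-(1-\eta)\log|1-1/\eta|$ and of $P'$, the membership $\Theta\in\mathcal S(\R)$, the boundary-term vanishing in your integration by parts, and the constant in $\widehat{|x|}$ under the paper's normalization; all are correct.
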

\begin{proof}
    If $\phi(x) = x^{2} \psi(x)$ for some Schwartz function $\psi$, we have
    \begin{align*}
        (\widehat{|x| g(x)}, \phi) &= (\widehat{-\partial^{2}_{x}|x| g(x)}, \psi) = \left(\widehat{\frac{1 - e^{i x}}{|x|}}, \psi \right) = \sqrt{\frac{2}{\pi}}\left(\log \left|1 - \frac{1}{x}\right|, \psi\right) = \sqrt{\frac{2}{\pi}}\left(\frac{1}{x^{2}}\log \left|1 - \frac{1}{x}\right|, \phi\right),
    \end{align*}
    as desired. For the third equality, see \cite[p. 361]{MR3469458}.

    The result is thus true up to a multiple of $\delta_{0}$ and $\delta_{0}'$. To take care of them, consider the function
    \begin{align*}
        \phi_{\varepsilon, \alpha, \beta}(x) = (\alpha + \beta x) e^{-\frac{1}{2}\varepsilon^{2} x^{2}}.
    \end{align*}
    It is enough to check that for any fixed $\alpha, \beta \in \R$, when $\varepsilon \to 0^{+}$, both the left- and right-hand side of (\ref{distributional_identity}) tend to 0. For the right-hand side, this follows from the dominated convergence theorem. For the left-hand side, note that
    \begin{align*}
        (\widehat{|x| g(x)}, \phi_{\varepsilon, \alpha, \beta}) = (|x| g(x), \widehat{\phi_{\varepsilon, \alpha, \beta}}) &= \int_{\R} |x| g(x) \left(\frac{\alpha}{\varepsilon} + i \frac{\beta}{\varepsilon^{3}} x \right) e^{-\frac{1}{2 \varepsilon^{2}} x^{2}} \d{x} \\
        &= \int_{\R} |x| g(\varepsilon x) \left(\varepsilon \alpha + i \beta x \right) e^{-\frac{1}{2} x^{2}} \d{x},
    \end{align*}
    which tends to $0$ as $\varepsilon \to 0^+$, since $g$ is continuous and $g(0) = 0$.
\end{proof}
Using Lemma~\ref{fourier_lemma}, we can write
\begin{align*}
    \frac{1}{\sqrt{2 \pi}} \left(\widehat{|r| g(r)}, \phi_{\theta, \lambda}\right) &= \frac{1}{\pi} \int_{\R} (\phi_{\theta, \lambda}(r) - \phi_{\theta, \lambda}(0) - \phi_{\theta, \lambda}'(0) r) \frac{1}{r^{2}}\log \left|1 - \frac{1}{r}\right| \d{r} \\
    &= \frac{1}{\pi} \int_{\R} (\phi_{\theta}(x) - \phi_{\theta}(0) - \phi_{\theta}'(0) x) \frac{1}{x^{2}}\log \left|1 - \frac{\lambda}{x}\right| \d{x}.
\end{align*}

Recall that $\lambda$ was an eigenvalue of $\cos(\theta) A + \sin(\theta) B$. We can now sum the above expression over all the eigenvalues of $\cos(\theta) A + \sin(\theta) B$, and integrate over $\theta$. The eigenvalues multiply to form the determinant, and we obtain
\begin{align*}
    (\widehat{G}, \varphi) = \frac{1}{\pi} \int_{0}^{\pi} \int_{\R} (\phi_{\theta}(x) - \phi_{\theta}(0) - \phi_{\theta}'(0) x) \frac{\log \left|\det\left(I - \frac{\cos(\theta) A + \sin(\theta) B}{x} \right)\right|}{x^{2}} \d{x} \d{\theta}.
\end{align*}
We would like to split this integral to the three parts corresponding to $\phi_{\theta}(x)$, $\phi_{\theta}(0)$, and $\phi'_{\theta}(0)$. While the resulting parts don't converge, we can remedy this with a cutoff:
\begin{align*}
    (\widehat{G}, \varphi) &= \lim_{\varepsilon \to 0^{+}} \left(\frac{1}{\pi} \int_{0}^{\pi} \int_{|x| > \varepsilon} \phi_{\theta}(x) \frac{\log \left|\det\left(I - \frac{\cos(\theta) A + \sin(\theta) B}{x} \right)\right|}{x^{2}} \d{x} \d{\theta} \right. \\
    &- \frac{1}{\pi} \int_{0}^{\pi} \int_{|x| > \varepsilon} \phi_{\theta}(0) \frac{\log \left|\det\left(I - \frac{\cos(\theta) A + \sin(\theta) B}{x} \right)\right|}{x^{2}} \d{x} \d{\theta} \\
    &- \left. \frac{1}{\pi} \int_{0}^{\pi} \int_{|x| > \varepsilon} \phi'_{\theta}(0) \frac{\log \left|\det\left(I - \frac{\cos(\theta) A + \sin(\theta) B}{x} \right)\right|}{x} \d{x} \d{\theta} \right)
\end{align*}

We will now analyze the three integrals inside the limit for $\varepsilon > 0$; these integrals
are absolutely integrable.

\begin{enumerate}[(i)]
    \item \textbf{The $\phi_{\theta}(x)$-term:} By definition of $\phi_\theta$,
    \begin{align*}
         & \frac{1}{\pi} \int_{0}^{\pi} \int_{|x| > \varepsilon} \phi_{\theta}(x) \frac{\log \left|\det\left(I - \frac{\cos(\theta) A + \sin(\theta) B}{x} \right)\right|}{x^{2}} \d{x} \d{\theta} \\
        =& \frac{1}{\pi} \int_{0}^{\pi} \int_{|x| > \varepsilon} \int_{\R} \varphi(\lambda x \overline{v}_{\theta} + \lambda y \overline{u}_{\theta}) \frac{\log \left|\det\left(I - \frac{\cos(\theta) A + \sin(\theta) B}{x} \right)\right|}{x^{2}} \d{y} \d{x} \d{\theta} 
    \end{align*}
    We make the change of variables
    \begin{align*}
        (a, b, t) = \left(x \cos(\theta) - y \sin(\theta), x \sin(\theta) + y \cos(\theta), \frac{y}{x (x^2 + y^2)}\right),
    \end{align*}
    and get
    \begin{align*}
        \int_{t^{2} (a^{2} + b^{2})^{2} < (a^{2} + b^{2})/\varepsilon^{2} - 1}  \log \left|\det\left(I - \frac{a A + b B}{a^{2} + b^{2}} - t (b A - a B) \right)\right| \varphi(a, b) \d{a} \d{b} \d{t}.
    \end{align*}
    
    \item \textbf{The $\phi_{\theta}(0)$-term:} With a change of variables similar to the previous case,
    \begin{align*}
        (a, b, t) = \left(-\sin(\theta) y, \cos(\theta) y, \frac{1}{x y}\right),
    \end{align*}
    we simplify to
    \begin{align*}
        \int_{t^{2} (a^{2} + b^{2})^{2} < (a^{2} + b^{2})/\varepsilon^{2}}  \log \left|\det\left(I - t (b A - a B) \right)\right| \varphi(a, b) \d{a} \d{b} \d{t}.
    \end{align*}

    \item \textbf{The $\phi'_{\theta}(0)$-term:} With the same change of variables as in the $\phi_\theta(0)$ case, one can simplify to
    \begin{align*}
        \int_{t^{2} (a^{2} + b^{2})^{2} < (a^{2} + b^{2})/\varepsilon^{2}}  \log \left|\det\left(I - t (b A - a B) \right)\right| \frac{\frac{d}{d h}\varphi(a + h b, b - h a) \Big\vert_{h = 0}}{(a^{2} + b^{2}) t} \d{a} \d{b} \d{t}.
    \end{align*}
\end{enumerate}

At this point, we have proven that
\begin{align*}
     \left(\widehat{G}, \varphi\right) =& \lim_{\varepsilon \to 0^{+}} \left(\int_{t^{2} (a^{2} + b^{2})^{2} < (a^{2} + b^{2})/\varepsilon^{2} - 1}  \log \left|\det\left(I - \frac{a A + b B}{a^{2} + b^{2}} - t (b A - a B) \right)\right| \varphi(a, b) \d{a} \d{b} \d{t} \right. \\
     &\left.- \int_{t^{2} (a^{2} + b^{2})^{2} < (a^{2} + b^{2})/\varepsilon^{2}}  \log \left|\det\left(I - t (b A - a B) \right)\right| \varphi(a, b) \d{a} \d{b} \d{t} \right. \\
     & \left.-\int_{t^{2} (a^{2} + b^{2})^{2} < (a^{2} + b^{2})/\varepsilon^{2}}  \log \left|\det\left(I - t (b A - a B) \right)\right| \frac{\frac{d}{d h}\varphi(a + h b, b - h a) \Big\vert_{h = 0}}{(a^{2} + b^{2}) t} \d{a} \d{b} \d{t}\right).
\end{align*}

Write
\begin{align}\label{ci_def}
    C_{1}(a, b) := \left(I - \frac{a A + b B}{a^{2} + b^{2}}\right)(b A - a B)^{-1}, \text{ and } C_{2}(a, b) := (b A - a B)^{-1}.
\end{align}

To prove identity (\ref{continuous_part}), we recall our additional assumption that $(A, B)$ is non-degenerate. We now also need to assume that $\varphi$ has compact support not containing $0$. We can then rewrite our expression as
\begin{align*}
    & \frac{1}{\pi}\lim_{\varepsilon \to 0^{+}} \left(\int_{(a^{2} + b^{2})/\varepsilon^{2} - 1 < t^{2} (a^{2} + b^{2})^{2} < (a^{2} + b^{2})/\varepsilon^{2}}  \log \left|\det\left(C_{1}(a, b) - t\right)\right| \varphi(a, b) \d{a} \d{b} \d{t} \right. \\
    &\left.+ \int_{t^{2} (a^{2} + b^{2}) < 1/\varepsilon^{2}} \log \left|\frac{\det\left(C_{1}(a, b) - t I\right)}{\det(C_{2}(a, b) - t I)}\right| \varphi(a, b) \d{a} \d{b} \d{t} \right. \\
    &\left. - \int_{(a^{2} + b^{2})/\varepsilon^{2} - 1 < t^{2} (a^{2} + b^{2})^{2} < (a^{2} + b^{2})/\varepsilon^{2}} \log \left|\det\left(C_{2}(a, b)\right)\right| \varphi(a, b) \d{a} \d{b} \d{t}\right. \\
    &\left. - \int_{t^{2} (a^{2} + b^{2}) < 1/\varepsilon^{2}} \log \left|\det\left(I - t C_{2}(a, b)^{-1} \right)\right| \frac{\frac{d}{d h}\varphi(a + h b, b - h a) \Big\vert_{h = 0}}{(a^{2} + b^{2}) t} \d{a} \d{b} \d{t}\right).
\end{align*}
We claim that the first term here tends to $0$ as $\varepsilon \to 0^+$. To that end, we integrate first over $t$ and then $a$ and $b$. Observe that for small $\varepsilon$ and fixed $a$ and $b$, the term
$\log \left|\det\left(C_{1}(a, b) - t\right)\right|$ is a sum of functions $\log |t - c|$, where $c$ is an eigenvalue of $C_{1}(a, b)$.  The integral in $t$ is over an interval of length $O(\varepsilon)$ with distance $O(1/\varepsilon)$ from $0$. One checks that such an integral is $ O(\varepsilon \log(1 + |c|) + \varepsilon \log(1/\varepsilon))$, i.e.
\begin{align*}
    &\left|\int_{(a^{2} + b^{2})/\varepsilon^{2} - 1 < t^{2} (a^{2} + b^{2})^{2} < (a^{2} + b^{2})/\varepsilon^{2}}  \log \left|\det\left(C_{1}(a, b) - t\right)\right| \varphi(a, b) \d{a} \d{b} \d{t}\right| \\
    =&~O\left(\int_{\R^{2}}(\varepsilon \sum_{i = 1}^{n}\log(1 + |\lambda_{i}(C_{1}(a, b))|) + \varepsilon \log(1/\varepsilon)) |\varphi(a, b)| \d{a} \d{b}\right).
\end{align*}
Since the eigenvalues of $C_{1}(a, b)$ explode at most polynomially near the singular lines of $C_{1}(a, b)$, these integrals converge to zero. By a similar argument, one sees that the third term converges to zero.

So, we know that if $\varphi$ has compact support not containing $0$, then
\begin{align*}
     \left(\widehat{G}, \varphi\right) =& \frac{1}{\pi}\lim_{\varepsilon \to 0^{+}}\left(\int_{t^{2} (a^{2} + b^{2}) < 1/\varepsilon^{2}} \log \left|\frac{\det\left(C_{1}(a, b) - t I\right)}{\det(C_{2}(a, b) - t I)}\right| \varphi(a, b) \d{a} \d{b} \d{t} \right. \\
      &\left. - \int_{t^{2} (a^{2} + b^{2}) < 1/\varepsilon^{2}} \log \left|\det\left(I - t C_{2}(a, b)^{-1} \right)\right| \frac{\frac{d}{d h}\varphi(a + h b, b - h a) \Big\vert_{h = 0}}{(a^{2} + b^{2}) t} \d{a} \d{b} \d{t}\right).
\end{align*}

We will now integrate out $t$ using the following two computational lemmas.
\begin{lem}\label{I1-estimate}
    For any $\lambda \in \C$, consider the integral
    \begin{align*}
         I_{1}(\lambda, M) := \int_{|t| < M} \log \left|1 - \frac{\lambda}{t}\right|\d{t}.
    \end{align*}
    Then, we have
    \begin{align}
        & I_{1}(\lambda, M) \nonumber \\
        =& M \log \left|1 - \frac{\lambda^{2}}{M^{2}} \right| + \Re(\lambda) \log \left|\frac{\lambda + M}{\lambda - M}\right| + \Im(\lambda) \left(\arctan\left(\frac{M - \Re(\lambda)}{\Im(\lambda)}\right) - \arctan\left(\frac{-M - \Re(\lambda)}{\Im(\lambda)}\right)\right) \label{betterFormula} \\
        =&: \pi |\Im(\lambda)| + E_{1}(\lambda, M), \nonumber
    \end{align}
    where
    \begin{align*}
        E_{1}(\lambda, M) = O\left(M \log \left(1 + \frac{\lambda^{2}}{M^{2}}\right)\right) + |\Im(\lambda)| o(1);
    \end{align*}
    the $o(1)$ term tends to zero with $|\lambda|/M$.
\end{lem}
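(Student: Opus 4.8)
The plan is to first reduce $I_1(\lambda,M)$ to a closed form and then read off the asymptotics termwise. For the closed form, write $\log\bigl|1-\tfrac{\lambda}{t}\bigr| = \log|t-\lambda| - \log|t|$, so that $I_1(\lambda,M) = \int_{|t|<M}\log|t-\lambda|\,\d{t} - \int_{|t|<M}\log|t|\,\d{t}$, the second integral being the elementary $2M(\log M - 1)$. For the first, put $\lambda = \alpha + i\beta$ and use $\log|t-\lambda| = \tfrac12\log\bigl((t-\alpha)^2+\beta^2\bigr)$ together with the antiderivative $\int \log(s^2+\beta^2)\,\d{s} = s\log(s^2+\beta^2) - 2s + 2\beta\arctan(s/\beta)$ after the shift $s = t-\alpha$. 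Subtracting, the terms linear in $M$ and the $M\log M$ terms cancel, and regrouping the surviving logarithms via $|M-\lambda|\,|M+\lambda| = |M^2-\lambda^2|$ and $\tfrac{|M+\lambda|}{|M-\lambda|} = \bigl|\tfrac{\lambda+M}{\lambda-M}\bigr|$ produces exactly \eqref{betterFormula}. The degenerate case $\beta = 0$ follows by continuity (interpreting $\Im(\lambda)\arctan(\,\cdot\,/\Im(\lambda))$ as $0$ there), and at the two points $\lambda = \pm M$ the two logarithmic terms in \eqref{betterFormula} that individually diverge cancel, so the identity persists.

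For the error estimate I would restrict to the regime $|\lambda|/M \le \tfrac12$ (all that is needed, since the lemma is applied with $M\to\infty$ and $\lambda$ bounded) and bound the three summands of \eqref{betterFormula} separately. The first two, $M\log\bigl|1-\tfrac{\lambda^2}{M^2}\bigr|$ and $\Re(\lambda)\log\bigl|\tfrac{\lambda+M}{\lambda-M}\bigr|$, are each $O\bigl(|\lambda|^2/M\bigr)$ because $\bigl|\log|1-w|\bigr| \le 2|w|$ and $\bigl|\log\bigl|\tfrac{1+w}{1-w}\bigr|\bigr| \le 4|w|$ for $|w|\le\tfrac12$; and $|\lambda|^2/M = O\bigl(M\log(1+|\lambda|^2/M^2)\bigr)$ since $u \asymp \log(1+u)$ on any bounded interval. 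The main term hides in the third summand: since $|\Re\lambda| < M$, the arguments $\tfrac{M-\Re\lambda}{\Im\lambda}$ and $\tfrac{-M-\Re\lambda}{\Im\lambda}$ have opposite signs and magnitude $\gtrsim M/|\Im\lambda|$, so inserting $\arctan(x) = \tfrac{\pi}{2}\sign(x) - \tfrac1x + O(|x|^{-3})$ gives
\[
\Im(\lambda)\Bigl(\arctan\!\bigl(\tfrac{M-\Re\lambda}{\Im\lambda}\bigr) - \arctan\!\bigl(\tfrac{-M-\Re\lambda}{\Im\lambda}\bigr)\Bigr) = \pi|\Im(\lambda)| \;-\; (\Im\lambda)^2\,\frac{2M}{M^2-(\Re\lambda)^2} \;+\; O\!\Bigl(\tfrac{(\Im\lambda)^4}{M^3}\Bigr),
\]
where the factor $\sign(\Im\lambda)$ coming out of the two $\arctan$'s combines with the prefactor $\Im(\lambda)$ to produce the absolute value. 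Here the remaining error is $|\Im(\lambda)|\cdot O(|\lambda|/M) = |\Im(\lambda)|\,o(1)$ since $\tfrac{2M}{M^2-(\Re\lambda)^2}\le \tfrac{8}{3M}$ in this regime. Adding the three estimates yields the claimed decomposition of $E_1(\lambda,M)$.

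I do not anticipate a genuine obstacle: the statement is an exact evaluation followed by elementary asymptotics. The only care required is bookkeeping — tracking $\sign(\Im\lambda)$ through the $\arctan$ expansion so that $\pi|\Im\lambda|$ (not $\pi\Im\lambda$) emerges, and sorting the remainder into the two advertised pieces: the ``polynomial in $\lambda/M$'' part $O(M\log(1+\lambda^2/M^2))$, which is the honest size of the error when $\Im\lambda \asymp |\lambda|$, and the sharper $|\Im\lambda|\,o(1)$ part, which is what is actually needed near the singular lines in the proof of Theorem~\ref{main1}, where $\Im\lambda$ can be tiny while $\Re\lambda$ is not.
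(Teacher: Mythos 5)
Your derivation of the closed form \eqref{betterFormula} is correct and matches what the paper (tersely) asserts, and your estimates in the regime $|\lambda|/M\le\tfrac12$ are sound: the $\arctan$ expansion, the sign bookkeeping producing $\pi|\Im\lambda|$, and the Taylor bounds for the first two summands all check out.

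The gap is in the sentence ``restrict to the regime $|\lambda|/M\le\tfrac12$ (all that is needed, since the lemma is applied with $M\to\infty$ and $\lambda$ bounded).'' This misreads how the lemma is used. In the proof of Theorem~\ref{main1}, after the rescaling $b=c\varepsilon$, the lemma is applied with $\lambda=\varepsilon\lambda_i(C_1(a,\varepsilon c))$, which is of size $O(1/|c|)$, and with $M=(a^2+\varepsilon^2c^2)^{-1/2}\approx 1/|a|$ for $a$ in a compact set bounded away from $0$. Thus $|\lambda|/M\approx |a|/|c|$, which is \emph{unbounded} as $c\to 0$. The error estimate is then invoked to produce the integrable majorant $c\mapsto C(\log(1+1/c^2)+|c|^{1/n-1})$, and that majorant has to control the integrand precisely in the small-$|c|$ (equivalently, large $|\lambda|/M$) range. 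So the case $|\lambda|/M\gg 1$ is genuinely required; indeed the paper's own proof explicitly says ``it is enough to consider the cases with $|\lambda|\ll 1$ and $|\lambda|\gg 1$'' after normalizing $M=1$. The missing case is elementary --- for $|\lambda|\gg M$ one has $I_1(\lambda,M)=2M\log(|\lambda|/M)+O(M)=O(M\log(1+\lambda^2/M^2))$, and $\pi|\Im\lambda|$ is absorbed into the $|\Im\lambda|\cdot O(1)$ part of the decomposition --- but it must be included; without it the estimate, and hence the dominated-convergence step downstream, is incomplete. (The intermediate range $|\lambda|/M\asymp 1$ is handled by local integrability of the logarithm, as the paper notes.)
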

\begin{proof}
    The first identity is straightforward, if somewhat tedious to verify. The limit of the expression is $\pi|\Im(\lambda)|$, so it remains to prove the error term estimate.
    
    By scaling, we may assume that $M = 1$.
    Since logarithm is locally integrable, it is enough to consider the cases with $|\lambda| \ll 1$ and $|\lambda| \gg 1$. The first two terms of (\ref{betterFormula}) can be estimated via Taylor expansion, while the third term is straightforward.
\end{proof}

\begin{lem}\label{I2-estimate}
    For any $\lambda \in \R$, consider the integral
    \begin{align*}
         I_{2}(\lambda, M) := \int_{|t| < M} \log \left|1 - \frac{t}{\lambda}\right| \frac{\d{t}}{t}.
    \end{align*}
    Then,
    \begin{align*}
        I_{2}(\lambda, M) &=  -\frac{\pi^2}{2}\sign(\lambda) + E_{2}(\lambda, M),
    \end{align*}
    where the $E_{2}(\lambda, M)$ is bounded and tends to zero with $|\lambda|/M$.
\end{lem}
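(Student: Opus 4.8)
The plan is to reduce $I_2(\lambda,M)$ to a single scale-invariant integral, evaluate its limiting value in closed form, and then dispose of the error term by an elementary tail estimate; the structure mirrors the proof of Lemma~\ref{I1-estimate}.

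\emph{Reduction.} First I would substitute $t=\lambda u$. Since $\{|t|<M\}$ has image $\{|u|<M/|\lambda|\}$ with Jacobian $|\lambda|$, and $1-t/\lambda=1-u$, this yields
\[
  I_{2}(\lambda,M)=\sign(\lambda)\int_{|u|<M/|\lambda|}\frac{\log|1-u|}{u}\,\d{u}=:\sign(\lambda)\,J\!\left(\frac{M}{|\lambda|}\right),
\]
where $J(R):=\int_{-R}^{R}u^{-1}\log|1-u|\,\d{u}$. The integrand is genuinely locally integrable — bounded near $u=0$ since $\log|1-u|=O(u)$, and with only an integrable logarithmic singularity at $u=1$ — so this is an honest Lebesgue integral. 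Consequently $E_{2}(\lambda,M)=\sign(\lambda)\big(J(M/|\lambda|)-J(\infty)\big)$, and it remains to compute $J(\infty)=-\pi^{2}/2$, to bound $J$ uniformly on $(0,\infty)$, and to estimate the rate $J(R)\to J(\infty)$.

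\emph{Symmetrizing and evaluating.} Next I would split $J(R)=\int_{0}^{R}+\int_{-R}^{0}$ and apply $u\mapsto-u$ to the second piece, obtaining
\[
  J(R)=\int_{0}^{R}\frac{1}{u}\log\left|\frac{1-u}{1+u}\right|\,\d{u}.
\]
The logarithmic growth at $u=\pm R$ has cancelled, the integrand is $O(u^{-2})$ at infinity, so $J$ is continuous on $(0,\infty)$ with $J(0^{+})=0$ and a finite limit $J(\infty)$. To evaluate it, I note that the involution $u\mapsto1/u$ fixes the integrand and swaps $(0,1)$ with $(1,\infty)$, hence $J(\infty)=2\int_{0}^{1}u^{-1}\log\frac{1-u}{1+u}\,\d{u}$; expanding $\log\frac{1-u}{1+u}=-2\sum_{k\ge0}u^{2k+1}/(2k+1)$ and integrating termwise gives $\int_{0}^{1}u^{-1}\log\frac{1-u}{1+u}\,\d{u}=-2\sum_{k\ge0}(2k+1)^{-2}=-\pi^{2}/4$, so $J(\infty)=-\pi^{2}/2$ (equivalently, $\int_0^1 u^{-1}\log(1-u)\,\d{u}=-\Li_2(1)=-\pi^2/6$ and $\int_0^1 u^{-1}\log(1+u)\,\d{u}=\pi^2/12$). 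With the reduction step this produces the main term $-\tfrac{\pi^{2}}{2}\sign(\lambda)$.

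\emph{Error term.} Finally $E_{2}(\lambda,M)=-\sign(\lambda)\int_{M/|\lambda|}^{\infty}u^{-1}\log\left|\tfrac{1-u}{1+u}\right|\,\d{u}$, and for $u\ge2$ one has $\big|\log\tfrac{u-1}{u+1}\big|=\log(1+\tfrac{2}{u-1})\le\tfrac{2}{u-1}\le\tfrac{4}{u}$, so $|E_{2}(\lambda,M)|\le\int_{M/|\lambda|}^{\infty}4u^{-2}\,\d{u}=4|\lambda|/M$ whenever $M/|\lambda|\ge2$; in particular $E_{2}\to0$ with $|\lambda|/M$. Uniform boundedness of $E_{2}$ over all $\lambda\in\R\setminus\{0\}$ and $M>0$ then follows since $J$ is continuous on $(0,\infty)$ with finite limits at both endpoints, hence bounded. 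The only genuine computation here is the closed form $J(\infty)=-\pi^{2}/2$, and even that is a standard series/dilogarithm manipulation; I do not anticipate a real obstacle. The single point requiring a word of care is simply that every integral above — in particular after the symmetrization, where singularities at $u=\pm1$ appear once $R>1$ — is absolutely convergent, so the change of variables and the splittings are all legitimate.
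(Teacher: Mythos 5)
Your argument is correct and follows essentially the paper's route: scale out $\lambda$ via $t=\lambda u$, symmetrize under $t\mapsto -t$ to get $\int_0^{R} u^{-1}\log\bigl|\tfrac{1-u}{1+u}\bigr|\,\d{u}$, fold $(1,\infty)$ onto $(0,1)$ with $u\mapsto 1/u$, and evaluate the closed form (your series $-2\sum_{k\ge 0}(2k+1)^{-2}=-\pi^2/4$ is the same computation as the paper's $-2\Li_2(1)+2\Li_2(-1)$). You additionally supply the explicit tail bound $|E_2|\le 4|\lambda|/M$ for $M/|\lambda|\ge 2$ together with boundedness of $J$ on $(0,\infty)$, which the paper dismisses as ``straightforward,'' but this is a filling-in of detail rather than a different approach.
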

\begin{proof}
    By scaling, we may assume that $\lambda = 1$. We have
    \begin{align*}
        \lim_{M \to \infty} \int_{|t| < M} \log |1 - t| \frac{\d{t}}{t} &= \int_{0}^{\infty} \log \left|\frac{1 - t}{1 + t}\right| \frac{\d{t}}{t} = \int_{0}^{1} \log \left(\frac{1 - t}{1 + t}\right) \frac{\d{t}}{t} + \int_{1}^{\infty} \log \left(\frac{t - 1}{t + 1}\right) \frac{\d{t}}{t} \\
        &= 2 \int_{0}^{1} \log \left(\frac{1 - t}{1 + t}\right) \frac{\d{t}}{t} = -2 \Li_{2}(1) + 2 \Li_{2}(-1) = -\frac{\pi^2}{2}.
    \end{align*}
    Here, $\Li_{2}$ stands for the dilogarithm function, defined as
    \begin{align*}
        \Li_{2}(z) = \int_{0}^{z} \frac{\log(1 - t)}{t} \d{t},
    \end{align*}
    whose properties and special values are well-documented (see \cite{MR2290758}). The error term estimates are straightforward.
\end{proof}

Having integrated out $t$, we are then left with
\begin{align}
    & \left(\widehat{G}, \varphi\right) \nonumber \\
    &= \frac{1}{\pi} \lim_{\varepsilon \to 0^{+}} \left(\int_{\R^{2}} \left[\sum_{i = 1}^{n} I_{1}\left(\lambda_{i}\left(C_{1}(a, b)\right), \frac{1}{\varepsilon \sqrt{a^{2} + b^{2}}}\right) -\sum_{i = 1}^{n} I_{1}\left(\lambda_{i}\left(C_{2}(a, b)\right), \frac{1}{\varepsilon\sqrt{a^{2} + b^{2}}}\right)\right]\varphi(a, b) \d{a} \d{b}\right. \nonumber \\
    -& \left. \int_{\R^{2}} \left[\sum_{i = 1}^{n} I_{2}\left(\lambda_{i}\left(C_{2}(a, b)\right), \frac{1}{\varepsilon \sqrt{a^{2} + b^{2}}}\right)\right]\frac{\frac{d}{d h}\varphi(a + h b, b - h a) \Big\vert_{h = 0}}{(a^{2} + b^{2})} \d{a} \d{b} \right) \nonumber \\
    &=\int_{\R^{2}} \sum_{i = 1}^{n} \left|\Im(\lambda_{i}(C_{1}(a, b)))\right| \varphi(a, b) \d{a} \d{b} \label{mainTerm} \\
    &+ \frac{1}{\pi} \lim_{\varepsilon \to 0^{+}} \left(\int_{\R^{2}} \left[\sum_{i = 1}^{n} E_{1}\left(\lambda_{i}\left(C_{1}(a, b)\right), \frac{1}{\varepsilon \sqrt{a^{2} + b^{2}}}\right) -\sum_{i = 1}^{n} E_{1}\left(\lambda_{i}\left(C_{2}(a, b)\right), \frac{1}{\varepsilon\sqrt{a^{2} + b^{2}}}\right)\right]\varphi(a, b) \d{a} \d{b}\right. \label{singularleft1}  \\
    -& \left. \int_{\R^{2}} \left[\sum_{i = 1}^{n}\left( -\frac{\pi^{2}}{2} \sign(\lambda_{i}(C_{2}(a, b))) + E_{2}\left(\lambda_{i}\left(C_{2}(a, b)\right), \frac{1}{\varepsilon\sqrt{a^{2} + b^{2}}}\right)\right)\right]\frac{\frac{d}{d h}\varphi(a + h b, b - h a) \Big\vert_{h = 0}}{(a^{2} + b^{2})} \d{a} \d{b} \right). \label{singularleft2}
\end{align}

We are finally ready to isolate the continuous part. Indeed, we will assume that $\varphi$ has compact support disjoint from the singular lines. In this case, the eigenvalues of $C_{1}(a, b)$ and $C_{2}(a, b)$ are bounded uniformly on the support of $\varphi$, so the error terms $E_{1}$ and $E_{2}$ tend uniformly to zero. Additionally, the sign term is locally constant, so integrating it by parts against the derivative term along circular arcs yields $0$. We are left with the first term, (\ref{mainTerm}), which is the desired continuous part (after dividing by $2 \pi$).

It remains to work out the singular part. We recall the additional assumption that $(A, B)$ does not have repeated real roots. If $(A, B)$ does not have real roots, there is no singular part. Otherwise, we can assume that $(0, 1) \in \R \mathbb{P}^{1}$ is one of the roots. This means that $B$ is singular, and the $a$-axis is a singular line; it suffices to consider $\varphi$ for which the support hits only this single singular line. In the following lemma, we analyze the behaviour of the eigenvalues of $C_{1}(a, b)$ and $C_{2}(a, b)$ near this singular line.

\begin{lem}\label{eigenvalue_dynamics}
    Assume that the pencil $(A, B)$ has a simple root $(0, 1)$ with unit eigenvector $v$. Then $\langle A v, v\rangle \neq 0$, and for $a \neq 0$, the matrices $C_{1}(a, b)$ and $C_{2}(a, b)$ (as defined in (\ref{ci_def})) have big eigenvalues, with asymptotics as follows,
    \begin{align*}
        \lambda_{\biglamb}\left(C_{1}(a, b)\right) &= \frac{a - \langle A v, v\rangle}{a b \langle A v, v\rangle} (1 + O(b^{1/n})), \\
        \lambda_{\biglamb}\left(C_{2}(a, b)\right) &= \frac{1}{b \langle A v, v\rangle} (1 + O(b)).
    \end{align*}
    The $O$-terms are uniform for $(a,b) \in K \times (-\delta,\delta)$, where $K$ is compact and does not contain $0$ and $\delta$ is sufficiently small. All the other eigenvalues of $C_{1}(a, b)$ and $C_{2}(a, b)$ are $O(b^{1/n - 1})$ and $O(1)$ respectively, with the same uniformity properties.
\end{lem}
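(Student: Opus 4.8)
The plan is to handle $C_{2}(a,b)$ with ordinary Hermitian perturbation theory and to reduce the non-Hermitian matrix $C_{1}(a,b)$ to a polynomial eigenvalue problem; throughout, everything must be made uniform for $(a,b)\in K\times(-\delta,\delta)$, which is the real point.

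\emph{The kernel direction and $C_{2}$.} Since the $a$-axis is a singular line, $-aB$ is singular for $a\neq0$, so $B$ is singular, and simplicity of the root forces $\ker B=\C v$ to be one-dimensional with $Bv=0$. Viewing $b\mapsto bA-aB=-aB+bA$ as a Hermitian-analytic perturbation of $-aB$, for $a\in K$ the eigenvalue $0$ of $-aB$ is simple and separated from the others $-a\mu_{i}$ (the $\mu_{i}$ the nonzero eigenvalues of $B$) by a gap bounded below uniformly on $K$; hence there is a real-analytic branch $\mu(a,b)=b\langle Av,v\rangle+O(b^{2})$ emanating from $0$, the remaining eigenvalues staying $\asymp1$, uniformly for $|b|<\delta$. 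Expanding the determinant through this branch, $\det(bA-aB)=(-a)^{n-1}\bigl(\prod_{i\ge2}\mu_{i}\bigr)\langle Av,v\rangle\,b+O(b^{2})$, so the root is simple iff $\langle Av,v\rangle\neq0$ (the first assertion), and $|\det(bA-aB)|\asymp|b|$. Inverting the eigenvalues of $bA-aB$ gives $\lambda_{\biglamb}(C_{2})=1/\mu(a,b)=\tfrac{1}{b\langle Av,v\rangle}(1+O(b))$ and all other eigenvalues of $C_{2}$ equal to $O(1)$, with the stated uniformity.

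\emph{Reducing $C_{1}$ to a polynomial and the big eigenvalue.} A direct computation gives $C_{1}(a,b)-C(a,b)=\tfrac{b}{a(a^{2}+b^{2})}I$, where $C(a,b):=(I-A/a)(bA-aB)^{-1}$ is the replacement allowed by Remark~\ref{C_remark} ($f=1/a$, $g=0$); the scalar is real and $O(b)$, so it suffices to analyze $C(a,b)$, whose eigenvalues are the roots of $Q_{a,b}(\lambda):=\det\bigl((I-A/a)-\lambda(bA-aB)\bigr)=\sum_{k=0}^{n}q_{k}(a,b)\lambda^{k}$. Its leading coefficient is $q_{n}=(-1)^{n}\det(bA-aB)\asymp b$, while $q_{n-1}(a,0)$, the coefficient of $\lambda^{n-1}$ in $\det\bigl((I-A/a)+\lambda aB\bigr)$, equals $a^{n-2}\bigl(\prod_{i\ge2}\mu_{i}\bigr)(a-\langle Av,v\rangle)$ by a rank-one computation using $\det S\cdot\det(P_{v^{\perp}}S^{-1}P_{v^{\perp}})=\langle Sv,v\rangle$ with $S=I-A/a$. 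Matching these two top coefficients isolates one root $-q_{n-1}/q_{n}=\tfrac{a-\langle Av,v\rangle}{a\langle Av,v\rangle\,b}(1+o(1))$; adding back the $O(b)$ scalar shift gives the asserted $\lambda_{\biglamb}(C_{1}(a,b))$.

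\emph{Error terms, the remaining eigenvalues, and the obstacle.} To quantify the $o(1)$ and bound the other $n-1$ eigenvalues I would run a Newton--Puiseux analysis of $Q_{a,b}$ in powers of $b$: the last edge of the lower hull of $\{(k,\operatorname{ord}_{b}q_{k})\}$ joins $(n-1,0)$ to $(n,1)$, has slope $1$ and a linear residual, hence yields exactly one root of size $b^{-1}$ with an integer-power (so relative $O(b)$) correction; combined with the crude a priori bounds $\|C_{1}(a,b)\|=O(b^{-1})$ (from $\sigma_{\min}(bA-aB)\asymp|b|$) and $|\det C_{1}(a,b)|=O(b^{-1})$, this controls the remaining eigenvalues. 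The genuine difficulty is uniformity in $a\in K$: at the finitely many $a\in\operatorname{spec}A$ (where $I-A/a$, hence $C(a,b)$, is singular and $q_{0}$ vanishes identically in $b$) and at $a=\langle Av,v\rangle$ (where $q_{n-1}(a,0)$ vanishes) the Newton polygon changes shape and eigenvalues coalesce, so the first-order picture degrades and Puiseux exponents with denominators up to $n$ appear; the exponent $1/n$ in the error and the bound $O(b^{1/n-1})$ on the other eigenvalues are exactly what survives these exceptional strata. Carrying the estimates uniformly through those special values of $a$ — not the cleaner generic behaviour — is where the work lies.
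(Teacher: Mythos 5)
Your treatment of $C_{2}$ is correct and complete (analytic perturbation of the simple eigenvalue $0$ of the Hermitian pencil, plus the expansion of $\det(bA-aB)$ giving $\langle Av,v\rangle\neq0$), and your leading-order identification of the big eigenvalue of $C_{1}$ via the scalar shift $C_{1}-C=\tfrac{b}{a(a^{2}+b^{2})}I$ and the coefficients $q_{n},q_{n-1}$ is a valid computation that reproduces the main term. But the quantitative content of the lemma --- the relative error $O(b^{1/n})$ on the big eigenvalue of $C_{1}$ and the bound $O(b^{1/n-1})$ on its remaining eigenvalues, \emph{uniformly} over $a\in K$ --- is exactly what you defer in your last paragraph, and that deferral is a genuine gap, not a technicality. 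The Newton--Puiseux route you propose is intrinsically a fixed-$a$ analysis: the polygon, the Puiseux exponents, and the radii of validity of the expansions all degenerate as $a$ approaches the exceptional set ($a\in\operatorname{spec}A$, $a=\langle Av,v\rangle$, and wherever lower coefficients of $Q_{a,b}$ change their order of vanishing in $b$), so "carrying the estimates through those strata" is not a finishing step but would require a separate stratification-and-matching argument that you have not supplied. In particular, nothing in your sketch actually produces the exponent $1/n$; you only observe a posteriori that $1/n$ is the kind of exponent that could appear.

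The missing idea, which is how the paper proves the lemma, is to avoid the characteristic polynomial altogether: Cramer's rule in an eigenbasis of $B$ gives $C_{2}(a,b)=\tfrac{1}{b\langle Av,v\rangle}vv^{*}+O(1)$ with a \emph{uniformly bounded Hermitian} error on $K\times(-\delta,\delta)$, whence $C_{1}(a,b)=\tfrac{1}{b\langle Av,v\rangle}\bigl(I-\tfrac{aA+bB}{a^{2}+b^{2}}\bigr)vv^{*}+O(1)$ with a uniformly bounded (non-Hermitian) error. The main terms are rank one with the advertised eigenvalue and $n-1$ zero eigenvalues, and the error exponents then fall out of standard global eigenvalue perturbation inequalities: Weyl's inequality for the Hermitian $C_{2}$ gives the $O(1)$ and $O(b)$ statements, and the Ostrowski--Elsner bound $|\lambda_{i}(M+E)-\lambda_{\sigma(i)}(M)|\lesssim(\|M\|+\|E\|)^{1-1/n}\|E\|^{1/n}$ for general matrices, applied with $\|M\|=O(1/|b|)$ and $\|E\|=O(1)$, gives precisely $O(b^{1/n-1})$ for every eigenvalue of $C_{1}$, uniformly in $a\in K$ with no case analysis at exceptional values of $a$. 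If you want to salvage your approach, you should replace the Newton--Puiseux step by such a perturbation inequality (or prove a uniform root-perturbation bound for $Q_{a,b}$ directly); as written, the hardest and most-used part of the lemma is asserted rather than proved.
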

\begin{proof}
    By our assumption, $\det(x A + B)$ has a single zero at $0$. By expanding this determinant in an eigenbasis for $B$, we can see that $\langle A v, v\rangle \neq 0$.

    Observe that
    \begin{align*}
        C_{2}(a, b) = \frac{1}{b \langle A v, v\rangle} v v^{*} + O(1),
    \end{align*}
    where the error term is Hermitian with uniformly bounded entries. This follows at once from Cramer's rule, applied in an eigenbasis for $B$. Consequently, for $C_{1}(a, b)$ we have
    \begin{align*}
        C_{1}(a, b) = \frac{1}{b \langle A v, v\rangle}\left(I - \frac{a A + b B}{a^{2} + b^{2}}\right) v v^{*} + O(1),
    \end{align*}
    where the error term again has uniformly bounded entries but not necessarily Hermitian. The main terms have the desired eigenvalues, and the error estimates follow from well-known eigenvalue perturbation bounds for general ($C_1$) and Hermitian ($C_2$) matrices; see \cite{MR0203473}.
\end{proof}

These eigenvalue estimates imply that (\ref{mainTerm}) is indeed integrable.

We will evaluate (\ref{singularleft1}) and (\ref{singularleft2}) using the eigenvalue expansions, starting with the former. Making the change of variable $b = c \varepsilon$, and observing that $E_{1}(s, t) = E_{1}(\varepsilon s, \varepsilon t)/\varepsilon$, we can rewrite (\ref{singularleft1}) as
\begin{align*}
    \frac{1}{\pi}\int_{\R^{2}} \sum_{i = 1}^{n} \left[E_{1}\left( \varepsilon \lambda_{i}\left(C_{1}(a, \varepsilon c)\right), \frac{1}{\sqrt{a^{2} + \varepsilon^{2} c^{2}} }\right) - E_{1}\left(\varepsilon \lambda_{i}\left(C_{2}(a, \varepsilon c)\right), \frac{1}{\sqrt{a^{2} + \varepsilon^{2} c^{2}}}\right)\right] \varphi(a, \varepsilon c) \d{a} \d{c}.
\end{align*}
By Lemma (\ref{eigenvalue_dynamics}), the integrand converges pointwise, and we obtain
\begin{align*}
    \frac{1}{\pi}\int_{\R^{2}} \left[I_{1}\left(\frac{a - \langle A v, v\rangle}{c a \langle A v, v\rangle}, \frac{1}{|a|}\right) - I_{1}\left(\frac{1}{c \langle A v, v\rangle}, \frac{1}{|a|}\right)\right] \varphi(a, 0) \d{a} \d{c}.
\end{align*}
To justify taking the limit inside, it suffices to note that by Lemma \ref{eigenvalue_dynamics}, for $i \in [n]$, we have $\varepsilon |\lambda_{i}\left(C_{1}(a, \varepsilon c)\right)| = O(1/|c|)$, with imaginary part $O(|c|^{1/n - 1} \varepsilon^{1/n})$. Apply the estimates of Lemma (\ref{I1-estimate}) to get an integrable majorant $c \mapsto C (\log(1 + 1/c^{2}) + |c|^{1/n - 1})$ for some $C > 0$.

We may now use Lemma \ref{I1-estimate} to evaluate the $I_1$-terms to get
\begin{align*}
    & \frac{1}{\pi}\int_{\R^{2}} \left[\frac{a - \langle A v, v\rangle}{c |a| \langle A v, v\rangle} \log\left|\frac{\frac{a - \langle A v, v\rangle}{\langle A v, v\rangle} + c}{\frac{a - \langle A v, v\rangle}{\langle A v, v\rangle} -c} \right| + \log \left|1 - \frac{1}{c^{2}}\frac{(a - \langle A v, v\rangle)^{2}}{\langle A v, v\rangle^{2}}\right|\right] \varphi(a, 0) \d{a} \d{c} \\
    -& \frac{1}{\pi}\int_{\R^{2}} \left[\frac{a}{c |a| \langle A v, v\rangle} \log\left|\frac{\frac{a}{\langle A v, v\rangle} + c}{\frac{a}{\langle A v, v\rangle} -c} \right| + \log \left|1 - \frac{1}{c^{2}}\frac{a^{2}}{\langle A v, v\rangle^{2}}\right|\right] \varphi(a, 0) \d{a} \d{c}.
\end{align*}
Finally, use Lemma \ref{I1-estimate} and \ref{I2-estimate} to calculate the integral in $c$, ending up with
\begin{align*}
    & \pi \int_{\R} \left(\left|\frac{1}{a} - \frac{1}{\langle A v, v\rangle} \right| - \frac{1}{|\langle A v, v\rangle|} \right) \varphi(a, 0) \d{a}.
\end{align*}

We will now turn to (\ref{singularleft2}). Since the $E_{2}$-term is bounded and converges to zero, it vanishes in the limit, and we are left with
\begin{align*}
    &\frac{\pi}{2}\int_{\R^{2}} \left[\sum_{i = 1}^{n} \sign(\lambda_{i}(C_{2}(a, b)))\right]\frac{\frac{d}{d h}\varphi(a + h b, b - h a) \Big\vert_{h = 0}}{(a^{2} + b^{2})} \d{a} \d{b} \\
    =& \frac{\pi}{2}\int_{\R^{2}} \sign(\langle A v, v\rangle b) \frac{\frac{d}{d h}\varphi(a + h b, b - h a) \Big\vert_{h = 0}}{(a^{2} + b^{2})} \d{a} \d{b}.
\end{align*}
Here, the equality follows from the fact that only the big eigenvalue can change its sign in the support of $\varphi$, and its sign is determined by Lemma \ref{eigenvalue_dynamics}. This integral can be further simplified by integration along the $\sqrt{a^{2} + b^{2}}$-radius arcs, with say the change of variables $(a, b) = (r \cos(\theta), r \sin(\theta))$. This results in the integral
\begin{align*}
    & \pi \int_{\R} \frac{\sign(\langle A v, v\rangle)}{a} \varphi(a, 0) \d{a}.
\end{align*}

Putting the terms together, we can see that the singular part is given by
\begin{align*}
    & \pi \int_{\R} \left(\left|\frac{1}{a} - \frac{1}{\langle A v, v\rangle} \right| - \frac{1}{|\langle A v, v\rangle|} + \frac{\sign(\langle A v, v\rangle)}{a} \right) \varphi(a, 0) \d{a} \\
    &= 2 \pi \int_{0}^{1} \frac{1 - t}{t} \varphi(\langle A v, v\rangle t, 0) \d{t} = 2 \pi \int_{0}^{1} \frac{1 - t}{t} \varphi(\langle A v, v\rangle t, \langle B v, v\rangle) \d{t},
\end{align*}
as desired.

It remains to get rid of the extra assumptions for the existence of $\mu_{A,B}$. This can be done with approximation: one can find a sequence of pairs $(A_{m}, B_{m})$ converging to $(A, B)$, such that 1) pencils $(A_{m}, B_{m})$ are non-degenerate, and 2) all roots of $(A_{m}, B_{m})$ are pairwise distinct.
These conditions are Zariski open, so are satisfied by small generic perturbations. Then, $(\widehat{G_{A_{m}, B_{m}}}, \varphi) = (G_{A_{m}, B_{m}}, \widehat{\varphi}) \to (G_{A, B}, \widehat{\varphi}) = (\widehat{G_{A, B}}, \varphi)$ for any $\varphi$ as before. So, $\widehat{G_{A,B}}$ is a weak limit of positive measures and hence a positive measure itself.
\end{proof}

\begin{lause}\label{main2}
    Let $\mu_{A, B}$ be as in Theorem \ref{main1}. Fix any measurable function $f$ such that for any $M > 0$,
    \begin{align*}
        \int_{-M}^{M} \left|\frac{f(t)}{t}\right| \d{t} < \infty.
    \end{align*}
    Define a function $H(f) : \R \to \R$ by 
    \begin{align*}
        H(f)(x) &= \int_{0}^{1} \frac{1 - t}{t} f(x t) \d{t}.
    \end{align*}
    Then, for any $x, y \in \R$, one has
    \begin{align}\label{main_new}
        \tr H(f)(A x + By) = \int_{\R^{2}} f(a x + b y) \d{\mu}_{A, B}(a, b).
    \end{align}
\end{lause}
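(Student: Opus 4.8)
The plan is to establish the identity \eqref{main_new} first for the one–parameter family $f_\xi(t) = e^{i\xi t} - 1$, $\xi\in\R$, and then to pass to arbitrary admissible $f$ by Fourier superposition and an approximation. For $f_\xi$, the definition \eqref{gdef} of $g$ gives $H(f_\xi)(x) = \int_0^1\frac{1-t}{t}(e^{i\xi x t}-1)\,\d{t} = g(\xi x)$, so by the spectral theorem $\tr H(f_\xi)(Ax+By) = \tr g(\xi(xA+yB)) = G_{A,B}(\xi x,\xi y)$ with $G_{A,B}$ as in \eqref{Gdef}, while the right–hand side of \eqref{main_new} is $\int_{\R^2}(e^{i((\xi x)a+(\xi y)b)}-1)\,\d{\mu}_{A,B}(a,b)$. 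Thus \eqref{main_new} for all $f_\xi$ and all $(x,y)$ amounts exactly to the identity
\begin{align*}
    G_{A,B}(u,v) = \int_{\R^2}\left(e^{i(au+bv)}-1\right)\d{\mu}_{A,B}(a,b) =: P_{A,B}(u,v), \qquad (u,v)\in\R^2,
\end{align*}
which I abbreviate $(\star)$; the integral converges absolutely and $P_{A,B}$ is continuous, since $|e^{i(au+bv)}-1|\le\min(2,|(a,b)|\,|(u,v)|)$ while, by Theorem~\ref{main1}, $\mu_{A,B}(\{|(a,b)|\ge\delta\})<\infty$ for all $\delta>0$ and $\int_{|(a,b)|\le1}|(a,b)|\,\d{\mu}_{A,B}<\infty$.

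To prove $(\star)$, I would first note that Theorem~\ref{main1} gives $(\widehat{G}_{A,B},\varphi) = 2\pi(\mu_{A,B},\varphi) = 2\pi\int\varphi\,\d{\mu}_{A,B}$ for Schwartz $\varphi$ with compact support avoiding $0$, and a cutoff at infinity (using finiteness of $\mu_{A,B}$ away from $0$) extends this to all Schwartz $\varphi$ supported away from $0$. On the other hand, $(\widehat{P}_{A,B},\varphi) = (P_{A,B},\widehat{\varphi})$ equals, after a Fubini exchange justified by the integrability above, $2\pi\int(\varphi(a,b)-\varphi(0,0))\,\d{\mu}_{A,B}(a,b)$ --- a well–defined quantity, as the integrand is $O(|(a,b)|)$ near $0$ and the two terms are separately $\mu_{A,B}$–integrable away from $0$ --- which coincides with $2\pi\int\varphi\,\d{\mu}_{A,B}$ whenever $\varphi(0)=0$. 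Hence $\widehat{G}_{A,B}-\widehat{P}_{A,B}$ annihilates every Schwartz function supported away from the origin, so it is supported at $\{0\}$ and $G_{A,B}-P_{A,B}$ is a polynomial. Finally, $g(z)=O(\log|z|)$ forces $G_{A,B}(u,v)=O(\log(2+|(u,v)|))$, and the behaviour of $\mu_{A,B}$ near the origin recorded in Theorem~\ref{main1} --- its density is $O(|(a,b)|^{-2})$ up to a transverse factor integrable along the singular lines, so $\mu_{A,B}(\{\rho<|(a,b)|\le1\})=O(\log(1/\rho))$ and $\int_{|(a,b)|\le\rho}|(a,b)|\,\d{\mu}_{A,B}=O(\rho)$ --- gives, after splitting the $(a,b)$–domain at the scale $|(a,b)|\sim 1/|(u,v)|$, the matching bound $P_{A,B}(u,v)=O(\log(2+|(u,v)|))$. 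A polynomial of at most logarithmic growth is constant, and $G_{A,B}(0,0)=\tr g(0)=0=P_{A,B}(0,0)$, so $G_{A,B}\equiv P_{A,B}$ and $(\star)$ holds.

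To get the general case from $(\star)$: if $f$ is Schwartz with $f(0)=0$, then Fourier inversion gives $f(t)=\frac1{\sqrt{2\pi}}\int\widehat{f}(\xi)(e^{i\xi t}-1)\,\d{\xi}$, and inserting this in both sides of \eqref{main_new} and moving $\tr\circ H$ (resp.\ $\int\cdot\,\d{\mu}_{A,B}$) past the $\xi$–integral by Fubini turns both sides, via $(\star)$, into $\frac1{\sqrt{2\pi}}\int\widehat{f}(\xi)\,G_{A,B}(\xi x,\xi y)\,\d{\xi}$. For a general $f$ satisfying the hypothesis, approximate by Schwartz $f_k$ vanishing at $0$ with $f_k\to f$ in $L^1_{\mathrm{loc}}$ and $\int_{-M}^M|f_k(t)-f(t)|/|t|\,\d{t}\to0$ for every $M$; from $H(f)(\lambda)=\int_0^\lambda(1/s-1/\lambda)f(s)\,\d{s}$ one gets $\tr H(f_k)(Ax+By)\to\tr H(f)(Ax+By)$, and pushing $\mu_{A,B}$ forward under $(a,b)\mapsto ax+by$ (whose image measure is finite away from $0$ and has density $O(1/|s|)$ near $0$, again by Theorem~\ref{main1}) gives $\int f_k(ax+by)\,\d{\mu}_{A,B}\to\int f(ax+by)\,\d{\mu}_{A,B}$. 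This yields \eqref{main_new}.

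The main obstacle is the step turning ``$G_{A,B}-P_{A,B}$ is a polynomial'' into ``$G_{A,B}=P_{A,B}$'', which hinges on the quantitative control of $\mu_{A,B}$ near the origin. Those estimates come straight from the explicit formula of Theorem~\ref{main1} when the pencil $(A,B)$ is non-degenerate with distinct real roots, but for an arbitrary pair $(A,B)$ one must approximate by generic pairs $(A_m,B_m)\to(A,B)$ and verify $P_{A_m,B_m}(u,v)\to P_{A,B}(u,v)$. Since $\mu_{A,B}$ has infinite mass near $0$, the weak convergence $\mu_{A_m,B_m}\to\mu_{A,B}$ furnished by Theorem~\ref{main1} must be supplemented with uniform–in–$m$ bounds on $\int_{|(a,b)|\le\delta}|(a,b)|\,\d{\mu}_{A_m,B_m}$ and on the mass near infinity; these degrade as $(A_m,B_m)$ approaches a degenerate pencil, and keeping them under control is the genuinely delicate point.
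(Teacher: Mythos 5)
Your route is genuinely different from the paper's. You reduce everything to the single identity $G_{A,B}(u,v)=\int(e^{i(au+bv)}-1)\,\d{\mu}_{A,B}$ by showing both sides have the same distributional Fourier transform away from the origin, so that their difference is a polynomial, which you then kill by matching $O(\log)$ growth bounds; general $f$ follows by Fourier superposition. The paper instead fixes $(x,y)=(1,0)$ by the basis-change property, tests $\mu_{A,B}$ against $\varphi_\varepsilon(a,b)=f(a)e^{-b^2/2\varepsilon^2}$ with $\supp f\not\ni 0$, lets the Gaussian concentrate, and reduces to the one-dimensional identity $\int_0^1\frac{1-t}{t}f(t)\,\d{t}=\frac{1}{\sqrt{2\pi}}(\widehat{g},f)$. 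The decisive structural difference is that the paper's test functions live in a strip avoiding a neighbourhood of the origin, so the proof needs nothing about $\mu_{A,B}$ near $(0,0)$ beyond the weak identification with $\widehat{G}_{A,B}$; your proof, by contrast, hinges on quantitative control of $\mu_{A,B}$ both near the origin and at infinity.

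That is where the genuine gaps are. First, to define $P_{A,B}$, to justify the Fubini exchange, and above all to prove $P_{A,B}(u,v)=O(\log(2+|(u,v)|))$, you need $\mu_{A,B}(\{|(a,b)|\ge\delta\})<\infty$, $\int_{|(a,b)|\le\rho}|(a,b)|\,\d{\mu}_{A,B}=O(\rho)$ and $\mu_{A,B}(\{\rho<|(a,b)|\le 1\})=O(\log(1/\rho))$. None of these is part of the statement of Theorem~\ref{main1}; in the paper, even the finiteness of $\mu_{A,B}$ away from the origin (indeed its compact support) is only obtained in Proposition~\ref{mu_unique}, whose proof invokes Theorem~\ref{main2}, so you cannot borrow it without circularity. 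You would have to extract all of these bounds from the formulas (\ref{continuous_part}) and (\ref{singular_part}), which means re-running the eigenvalue analysis of Lemma~\ref{eigenvalue_dynamics}: uniform integrability of $\sum_i|\Im\lambda_i(C_1(a,b))|$ across the singular lines, the $O(|(a,b)|^{-2})$ behaviour near the origin, and decay at infinity (where $C_1$ is a small non-Hermitian perturbation of the Hermitian $C_2$, whose eigenvalues are real). This is real work, not a citation. Second, as you yourself note, the degenerate case is open in your argument: weak convergence of $\mu_{A_m,B_m}$ against test functions supported away from $0$ does not give $P_{A_m,B_m}\to P_{A,B}$, since $e^{i(au+bv)}-1$ is neither compactly supported nor supported away from the origin, and you have no uniform-in-$m$ near-origin bounds. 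A non-circular repair is available: prove the additivity $\mu_{A,B}=\mu_{A_1,B_1}+\mu_{A_2,B_2}$ for block pencils directly from $G_{A,B}=G_{A_1,B_1}+G_{A_2,B_2}$ and the identification of $\mu$ with $\widehat{G}/2\pi$ away from $0$ (not via Proposition~\ref{mu_unique}), then factor $p_{A,B}$ by Helton--Vinnikov as in Proposition~\ref{basic_properties}. The final approximation step for general $f$ also quietly assumes that the pushforward of $\mu_{A,B}$ under $(a,b)\mapsto ax+by$ has density $O(1/|s|)$; this is true but again must be derived (e.g.\ from the already-established identity for nice $f$ together with $\tr H(|f|)(xA+yB)\le n\int_{-\Lambda}^{\Lambda}|f(s)|/|s|\,\d{s}$), not read off from Theorem~\ref{main1}.
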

\begin{proof}
    Let us start by considering a Schwartz function $f$ with compact support not containing $0$. By a change of variables (see Proposition \ref{basic_properties}, (\ref{lol_arg})), we may assume that $(x, y) = (1, 0)$. Define $\varphi_{\varepsilon}(a, b) = f(a) e^{-1/2 \varepsilon^{2} b^{2}}$. By the defining property of the measure $\mu_{A,B}$ from Theorem~\ref{main1},
    \begin{align*}
        \int_{\R^{2}} f(a) \d{\mu}_{A, B}(a, b) =& \lim_{\varepsilon \to 0^{+}}\int_{\R^{2}} \varphi_{\varepsilon}(a, b) \d{\mu}_{A, B}(a, b) = \lim_{\varepsilon \to 0^{+}} \int_{\R^{2}} \widehat{\varphi}_{\varepsilon}(a, b) G_{A, B}(a, b)\d{a} \d{b} \\
        =& \lim_{\varepsilon \to 0^{+}} \frac{1}{\varepsilon}\int_{\R^{2}} \widehat{f}(a) e^{-\frac{1}{2 \varepsilon^{2}} b^{2}} \tr g(a A + b B) \d{a} \d{b} = \lim_{\varepsilon \to 0^{+}} \int_{\R^{2}} \widehat{f}(a) e^{-\frac{1}{2} b^{2}} \tr g(a A + c \varepsilon B) \d{a} \d{c}\\
        =& \int_{\R^{2}} \widehat{f}(a) e^{-\frac{1}{2} b^{2}} \tr g(a A) \d{a} \d{c} = \sqrt{2 \pi} \sum_{i = 1}^{n} \int_{\R} \widehat{f}(a) g(a \lambda_{i}(A)) \d{a} \\
        =& \sqrt{2 \pi} \sum_{i = 1}^{n} \int_{\R} \mathcal{F}(x \mapsto f(\lambda_{i}(A) x))(a) g(a) \d{a} = \sqrt{2 \pi} \sum_{i = 1}^{n} \left(\widehat{g}, x \mapsto f(\lambda_{i}(A) x)\right).
    \end{align*}
    It therefore suffices to check that
    \begin{align*}
        \int_{0}^{1} \frac{1 - t}{t} f(t) \d{t} &= \frac{1}{\sqrt{2 \pi }}\left(\hat{g}, f\right).
    \end{align*}
    Writing $f(t) = t^{2} h(t)$, we are left to verify that
    \begin{align*}
        \int_{0}^{1} t (1 - t) h(t) \d{t} &= -\frac{1}{\sqrt{2 \pi }}\left(\widehat{\partial^{2} g}, h\right) = -\frac{1}{\sqrt{2 \pi }}\left(\mathcal{F}\left(x \mapsto\frac{(x + 2 i) e^{i x}}{x^{3}} + \frac{x - 2 i}{x^{3}}\right), h\right).
    \end{align*}
    But this is straightforward to check by calculating the inverse Fourier transform of $t (1 - t) \chi_{[0, 1]}(t)$.

    A general $f$ can be dealt with approximation. Start by assuming that $f$ is bounded and compactly supported with the support not containing $0$. One can then find a sequence $(f_{m})_{m = 1}^{\infty}$ of Schwartz functions with the same bound converging pointwise a.e. to $f$. Dominated convergence theorem then implies that both sides of (\ref{main_new}) converge when $m \to \infty$, so the identity (\ref{main_new}) is also true for such an $f$. A general non-negative $f$ can be now dealt with monotone convergence theorem, and to finish, decompose $f$ to positive and negative parts.
\end{proof}

\begin{prop}\label{basic_properties}
    \begin{enumerate}
        \item (Basis change) Let $V : \R^{2} \to \R^{2}$ be linear and invertible with  $V(a, b) = (v_{1, 1} a + v_{1, 2} b, v_{2, 1} a + v_{2, 2} b)$ for $a, b \in \R$. Define $(A', B') = V(A, B) = (v_{1, 1} A + v_{1, 2} B, v_{2, 1} A + v_{2, 2} B)$. Then, $\mu_{A', B'}$ is given by the pushforward measure $V_{*}(\mu_{A, B})$.
        \item (Invariance) The measure $\mu_{A, B}$ only depends on the homogeneous polynomial, the so called Kippenhahn polynomial,
        \begin{align*}
            p_{A, B}(x, y, z) = \det(z I + x A + y B),
        \end{align*}
        in the sense that if $p_{A, B} = p_{A', B'}$ for a different pair $(A', B')$, then $\mu_{A, B} = \mu_{A', B'}$.
        \item (Block matrices) Assume that $(A, B)$ is block diagonal, i.e. in some basis we have
        \begin{align*}
            A = \begin{bmatrix}
                A_{1} & 0 \\
                0 & A_{2}
            \end{bmatrix}
            \text{ and }
            B = \begin{bmatrix}
                B_{1} & 0 \\
                0 & B_{2}
            \end{bmatrix}
        \end{align*}
        for some $(A_{1}, B_{1}) \in M_{n_1}(\C) \times M_{n_1}(\C)$ and $(A_{2}, B_{2}) \in M_{n_2}(\C) \times M_{n_2}(\C)$ (with $n_{1} + n_{2} = n$, $n_{1}, n_{2} > 0$). Then,
        \begin{align}\label{mu_factoring}
            \mu_{A, B} = \mu_{A_{1}, B_{1}} + \mu_{A_{2}, B_{2}}.
        \end{align}
        \item If $p_{A, B}$ is reducible, then for some $n_{1}, n_{2} > 0$ with $n_{1} + n_{2} = n$, there exists  $(A_{1}, B_{1}) \in M_{n_1}(\C) \times M_{n_1}(\C)$ and $(A_{2}, B_{2})\in M_{n_2}(\C) \times M_{n_2}(\C)$ such that $p_{A, B} = p_{A_{1}, B_{1}} p_{A_{2}, B_{2}}$ and  hence (\ref{mu_factoring}) holds.
    \end{enumerate}
\end{prop}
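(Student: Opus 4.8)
The plan is to reduce all four parts to the behaviour of the auxiliary function $G=G_{A,B}$ from \eqref{Gdef}, exploiting that, by Theorem~\ref{main1}, $\mu_{A,B}$ equals $\tfrac{1}{2\pi}\widehat{G}_{A,B}$ away from the origin; accordingly every asserted equality of measures below will be established in this sense, i.e.\ after pairing against Schwartz functions with compact support avoiding $0$ (which is anyway the only sense in which $\mu_{A,B}$ is pinned down). For (2): since $G_{A,B}(x,y)=\sum_{i=1}^{n}g(\lambda_i(xA+yB))$ and $p_{A,B}(x,y,z)=\prod_i\bigl(z+\lambda_i(xA+yB)\bigr)$, the multiset of eigenvalues of $xA+yB$ is read off from $p_{A,B}$, so $G_{A,B}$ depends only on $p_{A,B}$; hence $p_{A,B}=p_{A',B'}$ forces $G_{A,B}=G_{A',B'}$, thus $\widehat{G}_{A,B}=\widehat{G}_{A',B'}$, thus $\mu_{A,B}=\mu_{A',B'}$. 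For (3): $g$ is entire (expand $(e^{ixt}-1)/t$ and integrate term by term), so the functional calculus respects direct sums and $\tr g\bigl((xA_1+yB_1)\oplus(xA_2+yB_2)\bigr)=\tr g(xA_1+yB_1)+\tr g(xA_2+yB_2)$; thus $G_{A,B}=G_{A_1,B_1}+G_{A_2,B_2}$, and taking Fourier transforms yields \eqref{mu_factoring}.

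For (1): a direct computation gives $xA'+yB'=(v_{1,1}x+v_{2,1}y)A+(v_{1,2}x+v_{2,2}y)B$, i.e.\ $G_{A',B'}=G_{A,B}\circ V^{T}$. Applying the standard rule $\mathcal{F}(h\circ L)(\xi)=|\det L|^{-1}\widehat{h}\bigl((L^{T})^{-1}\xi\bigr)$ with $L=V^{T}$, together with the matching change of variables for the pushforward of a distribution, identifies $\tfrac{1}{2\pi}\widehat{G}_{A',B'}$ with $V_{*}\bigl(\tfrac{1}{2\pi}\widehat{G}_{A,B}\bigr)$, which is the claim $\mu_{A',B'}=V_{*}\mu_{A,B}$; note that $V$ fixes the origin, so it preserves the class of admissible test functions. (Alternatively one could read (1) off from the identity \eqref{main_new} of Theorem~\ref{main2} after the substitution $(x,y)\mapsto V^{T}(x,y)$ and a change of variables in the integral, invoking uniqueness of $\mu_{A,B}$ away from $0$ as in Proposition~\ref{mu_unique}.)

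For (4): $p:=p_{A,B}$ is a real homogeneous polynomial of degree $n$, monic in $z$, and hyperbolic with respect to $(0,0,1)$, since for real $x,y$ its $z$-roots are the real numbers $-\lambda_i(xA+yB)$. Any real irreducible factor of $p$ is moreover geometrically irreducible, because a factor of $p$ also has only real $z$-roots over real $(x,y)$ (this rules out factors like $(z+x)^{2}+y^{2}$); hence reducibility of $p$ gives a factorization $p=q_1q_2$ with $q_i\in\R[x,y,z]$ homogeneous of degree $d_i\geq 1$, $d_1+d_2=n$. Matching $p(0,0,z)=z^{n}$ lets me normalize so that $q_i(0,0,z)=z^{d_i}$, and then each $q_i$ is hyperbolic with respect to $(0,0,1)$ with $q_i(0,0,1)=1$, so the Helton--Vinnikov theorem (the resolution of the Lax conjecture) supplies a definite determinantal representation $q_i=\det(zI+xA_i+yB_i)=p_{A_i,B_i}$ with $A_i,B_i$ real symmetric $d_i\times d_i$ matrices. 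The block pair $(A_1\oplus A_2,\,B_1\oplus B_2)\in M_n(\C)^{2}$ then has Kippenhahn polynomial $q_1q_2=p_{A,B}$, so by (2) its measure is $\mu_{A,B}$ and by (3) it equals $\mu_{A_1,B_1}+\mu_{A_2,B_2}$, which gives \eqref{mu_factoring}. I expect (4) to be the main obstacle: the factors of $p_{A,B}$ need not arise by restricting $(A,B)$ itself (Kippenhahn's conjecture on simultaneous block-reducibility can fail), so the argument is forced to invoke the Helton--Vinnikov determinantal representation theorem and then carry out the bookkeeping that brings an arbitrary definite representation into the normalized form $zI+xA_i+yB_i$.
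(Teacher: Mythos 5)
Your proof is correct and follows the same essential route as the paper. The only difference in emphasis is that you phrase (1)--(3) at the level of $G_{A,B}$ and its Fourier transform, whereas the paper reads them off from the defining identity \eqref{main_new} and the uniqueness Proposition~\ref{mu_unique} (a route you also mention as an alternative for (1)); these are two formulations of the same definition, so the content is the same. For (4) you supply the normalization bookkeeping (factors hyperbolic, monic-in-$z$ normalization so Helton--Vinnikov yields $\det(zI+xA_i+yB_i)$) that the paper leaves implicit, and your closing remark that Kippenhahn's conjecture can fail, so that one genuinely needs Helton--Vinnikov rather than a simultaneous block-decomposition of $(A,B)$, is an accurate observation about why this step is nontrivial.
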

\begin{proof}
    \begin{enumerate}
        \item \label{lol_arg} It follows by manipulating (\ref{main_new}) that $V_{*}(\mu_{A, B})$ satisfies the defining identity (\ref{main_new}) for $\mu_{A', B'}$. Hence, we will be done by uniqueness of the measure, Proposition \ref{mu_unique}.

        Alternatively, this follows from the explicit formulas (\ref{continuous_part}) and (\ref{singular_part}). Indeed, Remark \ref{C_remark} implies that
        \begin{align*}
            \rho_{A', B'} = \frac{1}{|\det(V)|} \rho_{A, B} \circ V^{-1},
        \end{align*}
        which is exactly the density of the pushforward of the continuous part of $\mu_{A, B}$. The singular points are respected by the pushforward, and hence the singular part in entirety.
        \item This is clear since the left-hand side of (\ref{main_new}) only depends on the eigenvalues (with multiplicities) of linear combinations of $A$ and $B$; and these are the same for $(A, B)$ and $(A', B')$ if $p_{A, B} = p_{A', B'}$.
        \item This follows from $\tr f(x A + yB) = \tr f(x A_{1} + y B_{1}) + \tr f(x A_{2} + y B_{2})$, and uniqueness of the measure.
        \item \label{helton_vinnikov} The polynomial $p_{A, B}$ and hence all its factors are hyperbolic in the sense of Gårding \cite{gaarding1959inequality}. The existence of $(A_{1}, B_{1})$ and $(A_{2}, B_{2})$ then follows from the Helton--Vinnikov theorem \cite{helton2007linear}.
    \end{enumerate}
\end{proof}

Property (\ref{helton_vinnikov}) of Proposition~\ref{basic_properties} implies that degenerate pencils can be reduced to the non-degenerate case by factoring the polynomial $p_{A, B}$.

\begin{prop}\label{mu_unique}
    There is at most one measure $\mu$ satisfying the condition of Theorem \ref{main2}.
\end{prop}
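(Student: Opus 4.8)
The plan is to recover $\mu$ from the identity \eqref{main_new} by Fourier inversion applied to test functions supported away from the origin. First note that the mass a candidate measure places at the single point $0$ is invisible to \eqref{main_new}: the origin contributes $f(0)\,\mu(\{0\})$ to the right-hand side, while every admissible $f$ for which $H(f)$ is everywhere finite must satisfy $f(0)=0$, since $H(f)(0)=f(0)\int_0^1\frac{1-t}{t}\,\d{t}=f(0)\cdot(+\infty)$. So the statement to prove is that \eqref{main_new} determines $\mu$ on $\R^2\setminus\{0\}$; this is all that is needed, since $\mu_{A,B}$ is not even locally finite at $0$. I also record an a priori bound: applying \eqref{main_new} to the admissible function $f(t)=|t|$, for which $H(f)(u)=|u|/2$, with $(x,y)=(1,0)$ and $(x,y)=(0,1)$ gives $\int_{\R^2}|a|\,\d{\mu}=\tfrac12\tr|A|$ and $\int_{\R^2}|b|\,\d{\mu}=\tfrac12\tr|B|$, both finite. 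Thus every measure satisfying the condition of Theorem~\ref{main2} has finite first moment and is, in particular, finite on compact subsets of $\R^2\setminus\{0\}$.

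Next, apply \eqref{main_new} to the admissible function $f(t)=e^{it}-1$ --- admissible because $(e^{it}-1)/t$ is bounded near $0$ --- with $(x,y)=(\xi,\eta)$; since $H(f)=g$ by \eqref{gdef}, this reads
\begin{align*}
    \int_{\R^2}\bigl(e^{i(a\xi+b\eta)}-1\bigr)\,\d{\mu}(a,b)=\tr g(\xi A+\eta B)=G_{A,B}(\xi,\eta).
\end{align*}
In particular the left-hand side is the same for any two measures $\mu,\mu'$ satisfying the condition. Now fix $\psi\in C_c^\infty(\R^2\setminus\{0\})$. Since $\psi\in\mathcal{S}(\R^2)$ and $\psi(0)=0$, so that $\int_{\R^2}\widehat{\psi}=2\pi\,\psi(0)=0$, Fourier inversion yields
\begin{align*}
    \psi(a,b)=\frac{1}{2\pi}\int_{\R^2}\widehat{\psi}(\xi,\eta)\bigl(e^{i(a\xi+b\eta)}-1\bigr)\,\d{\xi}\,\d{\eta}.
\end{align*}
Integrating against $\mu$ and exchanging the order of integration,
\begin{align*}
    \int_{\R^2}\psi\,\d{\mu}&=\frac{1}{2\pi}\int_{\R^2}\widehat{\psi}(\xi,\eta)\left(\int_{\R^2}\bigl(e^{i(a\xi+b\eta)}-1\bigr)\,\d{\mu}(a,b)\right)\d{\xi}\,\d{\eta}\\
    &=\frac{1}{2\pi}\int_{\R^2}\widehat{\psi}(\xi,\eta)\,G_{A,B}(\xi,\eta)\,\d{\xi}\,\d{\eta},
\end{align*}
which depends only on $A$, $B$, and $\psi$. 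Hence $\int\psi\,\d{\mu}=\int\psi\,\d{\mu'}$ for all $\psi\in C_c^\infty(\R^2\setminus\{0\})$, and therefore $\mu=\mu'$ on $\R^2\setminus\{0\}$.

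The main obstacle here is not conceptual but technical: it is the Fubini interchange above, which is exactly where the non-integrability of $\mu$ near the origin is absorbed. From $|e^{i(a\xi+b\eta)}-1|\le\min\bigl(2,\,|(a,b)|\,|(\xi,\eta)|\bigr)$ and $\widehat{\psi}\in\mathcal{S}(\R^2)$ one gets $\int_{\R^2}|\widehat{\psi}(\xi,\eta)|\,|e^{i(a\xi+b\eta)}-1|\,\d{\xi}\,\d{\eta}\le C(\psi)\min(1,|(a,b)|)$, and the finite-first-moment bound of the first paragraph then makes the resulting double integral absolutely convergent; the remaining manipulations are routine. (To obtain the statement literally on all of $\R^2$: when the pencil $(A,B)$ is non-degenerate, \eqref{main_new} applied to $f=\chi_{\{0\}}$ and a direction $(\xi,\eta)$ with $\xi A+\eta B$ invertible forces $\mu(\{a\xi+b\eta=0\})=0$, hence $\mu(\{0\})=0$; the general case follows from the reduction to non-degenerate pencils noted after Proposition~\ref{basic_properties}.)
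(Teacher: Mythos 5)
Your proof is correct, and it takes a genuinely different route from the paper's. The paper first deduces from \eqref{main_new} (applied to an $f$ supported outside $[-M,M]$) that $\mu$ is compactly supported, then passes to the finite measure $\tilde{\mu} = (a^2+b^2)\,\mu$ and determines it by its moments, obtained from \eqref{main_new} with polynomial $f$; uniqueness then follows from moment determinacy of compactly supported measures. You instead test with $f(t)=e^{it}-1$, whose $H$-transform is exactly $g$ from \eqref{gdef}, so that \eqref{main_new} pins down the ``characteristic function'' $\int(e^{i(a\xi+b\eta)}-1)\,\d{\mu}=G_{A,B}(\xi,\eta)$, and then recover $(\mu,\psi)$ for $\psi\in C_c^\infty(\R^2\setminus\{0\})$ by Fourier inversion; the Fubini step is correctly secured by the finite-first-moment bound you extract from $f(t)=|t|$. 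Your approach buys two things: it avoids both the compact-support step and any appeal to the moment problem, and it directly exhibits that any measure satisfying Theorem~\ref{main2} must equal $\frac{1}{2\pi}\widehat{G}_{A,B}$ on test functions vanishing near the origin, thereby reconnecting uniqueness to the characterization in Theorem~\ref{main1}; the paper's argument is shorter but less self-contained. Note that both arguments determine $\mu$ only on $\R^2\setminus\{0\}$ (the paper's weight $a^2+b^2$ also kills a point mass at the origin), which matches the paper's own gloss that the measure is ``unique away from $0$''; your closing parenthetical about forcing $\mu(\{0\})=0$ is therefore not needed for the statement as intended, and its appeal to the degenerate-pencil reduction is the one place where the argument is hand-wavy (that reduction concerns existence, not uniqueness), so I would simply drop it.
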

\begin{proof}
    Fix $M > \|A\| + \|B\|$, and consider $f$ that vanishes on $[-M, M]$ and is positive for $|x| > M$. Then $H(f)$ also vanishes on $[-M, M]$ and is positive for $|x| > M$, so the left-hand side of (\ref{main_new}) vanishes whenever $x, y \in [-1, 1]$. On the right-hand side, we see that $\mu$ is supported on $\{(a, b) \mid |a| + |b| \leq M\}$. While $\mu$ is not a finite measure, the measure $\tilde{\mu}$ defined by
    \begin{align*}
        \tilde{\mu}(f) = \int_{\R^2} (a^{2} + b^{2}) f(a, b) \d{\mu}(a, b)
    \end{align*}
    is. Applying (\ref{main_new}) for different polynomials and $x, y \in \R$, one can fix all moments of $\tilde{\mu}$. As $\tilde{\mu}$ is a compactly supported measure, the moments uniquely determine it (via expansion of its characteristic function), and hence also $\mu$.
\end{proof}

\section{Acknowledgements}

I want to thank Assaf Naor for his encouragement and helpful discussions. I am grateful to my roommate for their meticulous proofreading. A significant part of the computational investigations was performed on Mathematica.

\bibliography{refs}

\begin{thebibliography}{BMV75}

\bibitem[BCL94]{ball1994sharp}
Keith Ball, Eric~A Carlen, and Elliott~H Lieb.
\newblock Sharp uniform convexity and smoothness inequalities for trace norms.
\newblock {\em Inventiones mathematicae}, 115(1):463--482, 1994.

\bibitem[BMV75]{MR0416396}
D.~Bessis, P.~Moussa, and M.~Villani.
\newblock Monotonic converging variational approximations to the functional
  integrals in quantum statistical mechanics.
\newblock {\em J. Mathematical Phys.}, 16(11):2318--2325, 1975.

\bibitem[Bul71]{bullen1971criterion}
Peter Bullen.
\newblock A criterion for $n$-convexity.
\newblock {\em Pacific Journal of Mathematics}, 36(1):81--98, 1971.

\bibitem[Cha21]{MR4256836}
Victoria~M. Chayes.
\newblock Matrix rearrangement inequalities revisited.
\newblock {\em Math. Inequal. Appl.}, 24(2):431--444, 2021.

\bibitem[Cli16]{MR3526447}
Fabien Clivaz.
\newblock Stahl's theorem (aka {BMV} conjecture): insights and intuition on its
  proof.
\newblock In {\em Spectral theory and mathematical physics}, volume 254 of {\em
  Oper. Theory Adv. Appl.}, pages 107--117. Birkh\"{a}user/Springer, [Cham],
  2016.

\bibitem[DCK72]{MR0305035}
D.~Dacunha-Castelle and J.~L. Krivine.
\newblock Applications des ultraproduits \`a l'\'{e}tude des espaces et des
  alg\`ebres de {B}anach.
\newblock {\em Studia Math.}, 41:315--334, 1972.

\bibitem[Ere15]{MR3354963}
A.~\`E. Eremenko.
\newblock Herbert {S}tahl's proof of the {BMV} conjecture.
\newblock {\em Mat. Sb.}, 206(1):97--102, 2015.

\bibitem[G{\aa}r59]{gaarding1959inequality}
Lars G{\aa}rding.
\newblock An inequality for hyperbolic polynomials.
\newblock {\em Journal of Mathematics and Mechanics}, pages 957--965, 1959.

\bibitem[GS16]{MR3469458}
I.~M. Gel\textquotesingle{}fand and G.~E. Shilov.
\newblock {\em Generalized functions. {V}ol. 1}.
\newblock AMS Chelsea Publishing, Providence, RI, 2016.
\newblock Properties and operations, Translated from the 1958 Russian original
  [MR0097715] by Eugene Saletan, Reprint of the 1964 English translation
  [MR0166596].

\bibitem[Han56]{hanner1956uniform}
Olof Hanner.
\newblock On the uniform convexity of ${L}_p$ and $l_p$.
\newblock {\em Arkiv f{\"o}r Matematik}, 3(3):239--244, 1956.

\bibitem[Hei22]{heinavaara2022planes}
Otte Hein{\"a}vaara.
\newblock Planes in {S}chatten-$3$.
\newblock {\em arXiv preprint arXiv:2207.12812}, 2022.

\bibitem[Hei23]{heinavaara2023properties}
Otte Hein{\"a}vaara.
\newblock Properties of tracial joint spectral measures.
\newblock In preparation, 2023+.

\bibitem[HV07]{helton2007linear}
J~William Helton and Victor Vinnikov.
\newblock Linear matrix inequality representation of sets.
\newblock {\em Communications on Pure and Applied Mathematics: A Journal Issued
  by the Courant Institute of Mathematical Sciences}, 60(5):654--674, 2007.

\bibitem[Ikr93]{ikramov1993matrix}
Kh~D Ikramov.
\newblock Matrix pencils: {T}heory, applications, and numerical methods.
\newblock {\em Journal of Soviet Mathematics}, 64:783--853, 1993.

\bibitem[Kat66]{MR0203473}
Tosio Kato.
\newblock {\em Perturbation theory for linear operators}, volume Band 132 of
  {\em Die Grundlehren der mathematischen Wissenschaften}.
\newblock Springer-Verlag New York, Inc., New York, 1966.

\bibitem[LL01]{MR1817225}
Elliott~H. Lieb and Michael Loss.
\newblock {\em Analysis}, volume~14 of {\em Graduate Studies in Mathematics}.
\newblock American Mathematical Society, Providence, RI, second edition, 2001.

\bibitem[LS12]{MR2981640}
Elliott~H. Lieb and Robert Seiringer.
\newblock Further implications of the {B}essis-{M}oussa-{V}illani conjecture.
\newblock {\em J. Stat. Phys.}, 149(1):86--91, 2012.

\bibitem[McC67]{MR0225140}
Charles~A. McCarthy.
\newblock {$c\sb{p}$}.
\newblock {\em Israel J. Math.}, 5:249--271, 1967.

\bibitem[Nao98]{naor_thesis}
Assaf Naor.
\newblock Geometric problems in non linear functional analysis.
\newblock Master's thesis, The Hebrew University in Jerusalem, October 1998.

\bibitem[Pet94]{petz1994survey}
D{\'e}nes Petz.
\newblock A survey of certain trace inequalities.
\newblock {\em Banach Center Publications}, 30(1):287--298, 1994.

\bibitem[Sta13]{stahl2013proof}
Herbert~R Stahl.
\newblock Proof of the {BMV} conjecture.
\newblock {\em Acta mathematica}, 211(2):255--290, 2013.

\bibitem[Zag07]{MR2290758}
Don Zagier.
\newblock The dilogarithm function.
\newblock In {\em Frontiers in number theory, physics, and geometry. {II}},
  pages 3--65. Springer, Berlin, 2007.

\end{thebibliography}

\bibliographystyle{alpha}

\end{document}